\documentclass[a4paper,10pt,reqno]{amsart}

\usepackage{amsthm,amsmath,amsfonts,amssymb}
\usepackage{pgf, tikz-cd}
\usetikzlibrary{arrows}

\usepackage[colorlinks=true,urlcolor=blue,
citecolor=red,linkcolor=blue,linktocpage,pdfpagelabels,
bookmarksnumbered,bookmarksopen]{hyperref}

\voffset=-1.5cm \textheight=23cm \hoffset=-.5cm \textwidth=16cm
\oddsidemargin=1cm \evensidemargin=-.1cm
\footskip=35pt \linespread{1.25}
\parindent=20pt

\allowdisplaybreaks

\def\step#1#2{\par\noindent{\underline{\it Step~#1.}}\emph{ #2}\\}


%

\def\XXint#1#2#3{{\setbox0=\hbox{$#1{#2#3}{\int}$} \vcenter{\vspace{-1pt}\hbox{$#2#3$}}\kern-.5\wd0}}
\def\Xint#1{\mathchoice {\XXint\displaystyle\textstyle{#1}}{\XXint\textstyle\scriptstyle{#1}}{\XXint\scriptstyle\scriptscriptstyle{#1}}{\XXint\scriptscriptstyle\scriptscriptstyle{#1}}\!\int}
\def\intmed{\hbox{\ }\Xint{\hbox{\vrule height -0pt width 9pt depth 0.7pt}}}

\numberwithin{equation}{section}

\def\eps{\varepsilon}
\def\Chi#1{\hbox{{\large $\chi$}{\Large $_{_{#1}}$}}}
\def\C{\mathcal{C}}
\def\R{\mathbb{R}}

\def\S{\mathbb{S}}
\def\H{\mathcal{H}}
\def\comp{\subset\subset}
\def\res{\mathop{\hbox{\vrule height 5.5pt width .5pt depth 1pt \vrule height -.5pt width 5.5pt depth 1pt}}\nolimits}
\def\proofof#1{\begin{proof}[Proof of #1]}
\def\bal{\begin{aligned}}
\def\eal{\end{aligned}}

\theoremstyle{plain}
\newtheorem{thm}{Theorem}[section]
\newtheorem*{thm*}{Theorem}
\newtheorem{thmmain}{Theorem}

\newtheorem{defin}[thm]{Definition}

\title[The {\normalsize $\eps-\eps^\beta$} property for a double density]{The $\eps-\eps^\beta$ property in the isoperimetric problem with double density, and the regularity of isoperimetric sets}

\author{Aldo Pratelli}
\address{Universit\`a di Pisa, Dipartimento di Matematica, Largo Bruno Pontecorvo 5, IT-56127 Pisa}
\email{aldo.pratelli@unipi.it}
\author{Giorgio Saracco}
\address{Universit\`a di Pavia, Dipartimento di Matematica, via Ferrata 5, IT-27100 Pavia}
\email{giorgio.saracco@unipv.it}

\thanks{Both authors are members of the INdAM institute and have been partly supported by the INdAM--GNAMPA 2019 project ``Problemi isoperimetrici in spazi Euclidei e non'' (n.~prot.~U-UFMBAZ-2019-000473 11-03-2019).}

\subjclass[2010]{Primary: 49Q10. Secondary: 49Q20, 58B20}

\keywords{$\eps-\eps$ property, isoperimetric problem, anisotropic perimeter, Finsler surface energy, boundedness, regularity}

\begin{document}

\begin{abstract}
We prove the validity of the $\eps-\eps^\beta$ property in the isoperimetric problem with double density, generalising the known properties for the case of single density. As a consequence, we derive regularity for isoperimetric sets.
\end{abstract}

 \hspace{-3cm}
 {
 \begin{minipage}[t]{0.6\linewidth}
 \begin{scriptsize}
 \vspace{-2cm}
 This is a pre-print of an article published in \emph{Adv. Nonlinear Stud.}. The final authenticated version is available online at \href{https://doi.org/10.1515/ans-2020-2074}{https://doi.org/10.1515/ans-2020-2074}
 \end{scriptsize}
\end{minipage} 
}

\maketitle

\section{Introduction}

For any $N\geq 2$, we consider the isoperimetric problem in $\R^N$ with double density. More precisely, two lower semi-continuous (l.s.c.) and locally summable functions $f:\R^N\to (0,+\infty)$ and $h:\R^N\times \S^{N-1}\to (0,+\infty)$ are given, and we measure the volume and perimeter of any Borel set $E\subseteq\R^N$ according to the formulae
\begin{align*}
|E| := \int_E f(x)\, dx\,, && P(E):= \int_{\partial^* E} h(x,\nu_E(x))\, d\H^{N-1}(x)\,,
\end{align*}
where for every set of locally finite perimeter $E$ we denote by $\partial^* E$ its reduced boundary and by $\nu_E(x)\in\S^{N-1}$ the outer normal at $x \in \partial^* E$, while $P(E)=+\infty$ whenever $E$ is not a set of locally finite perimeter (see Section~\ref{sec:prel} for the basic definitions of the theory of sets of finite perimeter). We will refer to the ``single density'' case when $h(x,\nu)=f(x)$ for every $x\in\R^N$, and $\nu\in\S^{N-1}$. Whenever using the standard $N$-dimensional Lebesgue measure (i.e., $f\equiv 1$) or Euclidean perimeter (i.e., $h\equiv 1$), we shall use the subscript ${\rm Eucl}$, i.e. $|\cdot|_{\rm Eucl}$ and $P_{\rm Eucl}(\cdot)$.\par

The isoperimetric problem with single density is a wide generalisation of the classical Euclidean isoperimetric problem, and it has been deeply studied in the last decades, we refer the interested reader to~\cite{BBCLT, CMV, CRoS, DFP, FPu, FMP11, Gil18, MP, RCBM} and the references therein. The case of double density is yet a further important generalisation, since many of the possible applications correspond to two different densities. The simplest example is given by Riemannian manifolds, which locally behave as $\R^N$ with double density, being $f$ the norm of the Riemannian metric, and $h$ its derivative. In general, in view of applications it is crucial not only that $f$ and $h$ may differ, but also that $h$ may depend both on the point (from now on, the \emph{spatial variable}), and on the direction of the tangent space at that point (from now on the ~\emph{angular variable}).\par

The isoperimetric problem consists in finding, if they exist, the sets $E$ of minimal perimeter among those of fixed volume. The three main questions one is interested in are existence, boundedness and regularity of isoperimetric sets. Existence in the single density case has been studied in several papers, some of which are those quoted above; for the case of double density it has been studied either for some rather specific choices of the weights (radial~\cite{ABCMP17, CJQW, DGHKPZu, DHHT}, monomial~\cite{AFu, ABCMPu, ABCMP19, ABCMP19a}, Gauss-like~\cite{BCM16, FMP11} or in some Carnot groups~\cite{FM16, FS19}), or under very general conditions on $f$ and $h$, see~\cite{FPSu, PS, Sar17}. Concerning boundedness and regularity, a quite wide answer has been given in the paper~\cite{CP} for the single density case. The purpose of the present paper is to generalise the results of that paper to the case of double density.\par

A fundamental tool to study the isoperimetric problem is the classical ``$\eps-\eps$'' regularity property of Almgren, which basically says that one can locally modify a set $E$ by changing its volume by a small (positive or negative) quantity $\eps$, while increasing the perimeter by at most a quantity $C|\eps|$. In the case of single density, the $\eps-\eps$ property is true for any set of locally finite perimeter as soon as the density is at least locally Lipschitz, but otherwise it is in general false. This is the main reason why many regularity results for isoperimetric sets require a Lipschitz regularity hypothesis on the density.\par

To extend the study to the case of less regular densities, it is convenient to weaken the $\eps-\eps$ property to the so-called $\eps-\eps^\beta$ property, introduced in~\cite{CP}, that we immediately recall.

\begin{defin}[The $\eps-\eps^\beta$ property]\label{defepsepsbeta}
Let $E$ be a set of locally finite perimeter and $\beta \in [0,1]$. We say that $E$ possesses the $\eps-\eps^\beta$ property (relative to the densities $f$ and $h$) if for any ball $B$ such that $\H^{N-1}(B\cap \partial^* E)>0$ there exist constants $C>0$ and $\bar \eps > 0$ such that for all $|\eps|< \bar \eps$, there exists a set $F$ such that
\begin{align}\label{epsepsbeta}
F\Delta E \comp B\,, && |F| - |E| = \eps\,, && P(F)-P(E) \leq C|\eps|^\beta\,.
\end{align}
\end{defin}

In the case of single density, in~\cite{CP} it was shown that whenever $f$ is H\"older continuous with some exponent $0\leq\alpha\leq 1$, then the $\eps-\eps^\beta$ property holds for every set of locally finite perimeter for some $\beta=\beta(N, \alpha)$. Moreover, if $f$ is locally bounded and continuous, the $\eps-\eps^\beta$ property holds with $\beta=(N-1)/N$ and with \emph{any positive constant $C$}. This means that for any ball $B$ the constant $C>0$ in Definition~\ref{defepsepsbeta} can be taken arbitrarily small, up to choosing $\bar\eps$ small enough. As a consequence of the $\eps-\eps^\beta$ property, boundedness and ${\rm C}^{1,\eta}$ regularity of isoperimetric sets for some $\eta=\eta(N,\alpha)$ was obtained. We shall show that the results of~\cite{CP} can be extended to the general case of a double density. More precisely, we have the following three results.
\begin{thmmain}[The $\eps-\eps^\beta$ property]\label{thm:e-ebeta}
Assume that $f$ and $h$ are locally bounded, that $h$ is locally $\alpha$-H\"older in the spatial variable for some $\alpha \in [0,1]$, and that $E\subseteq\R^N$ is a set of locally finite perimeter. Then, $E$ possesses the $\eps-\eps^\beta$ property, where $\beta$ is given by
\begin{equation}\label{defbeta}
\beta=\beta(N, \alpha) = \frac{\alpha+(N-1)(1-\alpha)}{\alpha + N(1-\alpha)}\,.
\end{equation}
If $\alpha=0$ (in which case locally $\alpha$-H\"older precisely means locally bounded) and $h$ is continuous in the spatial variable, then $E$ possesses the $\eps-\eps^{\frac{N-1}N}$ property for all constants $C>0$ (that is, given any ball $B$ then the constant $C$ of Definition~\ref{defepsepsbeta} can be taken arbitrarily small, up to choosing $\bar\eps$ small enough).
\end{thmmain}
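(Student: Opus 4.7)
\emph{Setup.} Since $\H^{N-1}(B\cap\partial^*E)>0$, by De~Giorgi's structure theorem I pick a point $x_0\in B\cap\partial^*E$ at which the blowup of $E$ is a halfspace; after a rigid motion I assume $x_0=0$ and $\nu_E(0)=e_N$, so the limiting halfspace is $H:=\{x_N<0\}$. Fix $R>0$ small enough that the cylinder $C_r:=\{y\in\R^{N-1}:|y|<r\}\times(-r,r)\comp B$ for every $r\le R$, and that the quantitative blowup bounds
\[
|(E\triangle H)\cap C_r|_{\rm Eucl}=o(r^N),\qquad P_{\rm Eucl}(E;C_r)=\omega_{N-1}r^{N-1}(1+o(1))
\]
hold as $r\to 0^+$. (Up to throwing away a null set of radii I may also assume that $\partial^*E$ has $\H^{N-1}$-null trace on $\partial C_r$.)

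\emph{The competitor and volume equation.} For $r\le R$ and $|t|$ small define
\[
F_{r,t}:=(E\setminus C_r)\cup\bigl((E+t\,e_N)\cap C_r\bigr),
\]
i.e.\ shift $E$ by $t\,e_N$ inside $C_r$ and leave it unchanged outside. The map $t\mapsto|F_{r,t}|$ is continuous by $L^1$-continuity of translation together with local summability of $f$. Since $E\approx H$ in $C_r$, the shift sweeps a ``slab'' across $\{x_N=0\}$ of Euclidean volume $\omega_{N-1}r^{N-1}|t|$ up to a blowup error of size $o(r^N)$. Combining this with the two-sided local bounds on $f$ (upper from local boundedness, lower from $f>0$ plus lower semicontinuity), the intermediate value theorem provides, for every $|\eps|\le c\,r^{N-1}$ with $c>0$ small, a shift $t=t(r,\eps)$ with
\[
|F_{r,t}|-|E|=\eps,\qquad|t|\le C\,|\eps|/r^{N-1}.
\]

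\emph{Perimeter estimate.} Decompose $\partial^*F_{r,t}$ into the untouched part outside $\overline{C_r}$, the shifted piece $\partial^*(E+te_N)\cap C_r$, and the trace-jump set $J\subset\partial C_r$ where $\chi_E$ (from outside) differs from $\chi_{E+te_N}$ (from inside). The crucial point is that translation preserves normals, so changing variables in the shifted integral and applying the spatial $\alpha$-H\"older hypothesis on $h$ gives
\[
\int_{\partial^*(E+te_N)\cap C_r}\!\!\! h(x,\nu_{F_{r,t}})\,d\H^{N-1}\le\int_{\partial^*E\cap C_r}\!\!\! h(y,\nu_E)\,d\H^{N-1}+C\,|t|^\alpha\,r^{N-1},
\]
up to a blowup error on the thin slab $C_r\triangle(C_r-te_N)$ (which lies at distance $\sim r$ from $\{x_N=0\}$, hence has $P_{\rm Eucl}(E;\cdot)=o(r^{N-1})$ by De~Giorgi). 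On $\partial C_r$, the trace jump occurs essentially on the lateral strip $\partial\{|y|<r\}\times[\min(0,t),\max(0,t)]$, of Euclidean area $(N-1)\omega_{N-1}r^{N-2}|t|(1+o(1))$, contributing at most $C|t|r^{N-2}$ to the $h$-perimeter. Summing and cancelling $P_h(E;C_r)$ yields
\[
P(F_{r,t})-P(E)\ \le\ C\bigl(|t|^\alpha r^{N-1}+|t|\,r^{N-2}\bigr)+o(r^{N-1}).
\]

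\emph{Optimisation and the refined statement.} Substituting $|t|\le C|\eps|/r^{N-1}$ and balancing the two main terms selects $r\sim|\eps|^{(1-\alpha)/((N-1)(1-\alpha)+1)}$, at which both terms equal constant multiples of $|\eps|^\beta$ with $\beta$ as in~\eqref{defbeta}; the lower-order blowup errors are absorbed into the constant for $|\eps|<\bar\eps$ sufficiently small. For the refined statement ($\alpha=0$ and $h$ continuous in $x$), I replace the H\"older factor $C|t|^\alpha$ by the spatial modulus of continuity $\omega_h(|t|)\to 0$. Given any prescribed $C'>0$, first pick $r=K|\eps|^{1/N}$ with $K$ large enough that the lateral term $C|t|r^{N-2}\le\tfrac12 C'|\eps|^{(N-1)/N}$; then shrink $\bar\eps$ so that the resulting $|t|$ makes $\omega_h(|t|)r^{N-1}\le\tfrac12 C'|\eps|^{(N-1)/N}$, giving the desired arbitrarily small constant. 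The main technical obstacle, where I expect most of the work to concentrate, is the rigorous bookkeeping of the trace jump on $\partial C_r$ and the verification that all blowup-induced errors genuinely have lower order than the two main scales $r^{N-1}|t|^\alpha$ and $r^{N-2}|t|$ at the optimising radius.
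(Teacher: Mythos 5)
Your proposal is essentially the same as the paper's: translate $E$ vertically inside a cylinder of radius $r\sim|\eps|^\gamma$, use the spatial H\"older modulus of $h$ to bound the translated perimeter contribution, estimate the lateral trace jump by $\sim|t|r^{N-2}$, balance the two terms, and optimise $\gamma$ to get $\beta$ as in~\eqref{defbeta}; your refinement for $\alpha=0$ (enlarging $r$ by a factor $K$ and then shrinking $\bar\eps$) likewise mirrors the paper's Step~IX. The ``rigorous bookkeeping'' you correctly flag at the end is precisely where the paper spends Steps~I--VII, using Vol'pert's theorem plus averaging over a family of sub-cubes and over vertical levels $\sigma^\pm$ to produce a good cylinder on which the lateral slice, the top strip, and the bad vertical sections all carry controllably small perimeter --- your single-translation competitor is a slight simplification of the paper's slide-and-stretch, but the same averaging machinery is needed to justify the $(1+o(1))$ claims about the trace jump.
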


Notice that the $\eps-\eps^\beta$ property only requires the regularity of $h$ in the spatial variable, while no regularity of $h$ in the angular variable or of $f$ is required --- except that both $f$ and $h$ have to be l.s.c. and locally $L^1$, which is required in general to set the problem.

\begin{thmmain}[Boundedness]\label{thm:boundedness}
Assume that there exists a constant $M>0$ such that
\begin{align}\label{eq:bounds}
\frac 1M \leq f(x) \leq M\,, && \frac 1M \leq h(x, \nu) \leq M&& \forall\,x\in\R^N,\,\nu\in\S^{N-1}\,,
\end{align}
and that $E\subseteq\R^N$ is an isoperimetric set for which the $\eps-\eps^{\frac{N-1}N}$ property holds with an arbitrarily small constant $C$. Then, $E$ is bounded.
\end{thmmain}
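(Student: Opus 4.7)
The plan is to derive a differential inequality for the function $u(r) := |E \setminus B_r|$, the weighted volume outside the Euclidean ball of radius $r$ centered at a fixed point, that forces $u$ to vanish at some finite $r_*$, and hence $E \subset B_{r_*}$. Since $f \geq 1/M$ one has $|E|_{\rm Eucl} \leq M |E| < \infty$, so $u(r)\to 0$ as $r\to \infty$; by the coarea formula, $u$ is absolutely continuous with $-u'(r) = \int_{E \cap \partial B_r} f \, d\H^{N-1}$ for a.e.~$r$. Ruling out the trivial cases $|E| = 0$ or $E = \R^N$, one can fix a ball $B$ with $\H^{N-1}(B \cap \partial^* E) > 0$; denote $p(r):=P(E;\R^N \setminus \overline{B_r})$.

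First, I would apply the $\eps-\eps^{(N-1)/N}$ property at $B$ with a constant $C$ chosen strictly smaller than $c_N M^{-(2N-1)/N}$, where $c_N$ is the Euclidean isoperimetric constant; this is allowed by hypothesis, with some threshold $\bar\eps > 0$. For $r$ so large that $\overline{B} \comp B_r$ and $u(r) < \bar\eps$, pick $F_1$ with $F_1 \Delta E \comp B$, $|F_1| = |E| + u(r)$, and $P(F_1) \leq P(E) + C u(r)^{(N-1)/N}$, and set $F := F_1 \cap B_r$. Since $F_1 = E$ outside $B \subset B_r$, one gets $|F| = |E|$, so $F$ is an admissible competitor for $E$.

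Next, for a.e.~$r$ large, $\H^{N-1}(\partial^* E \cap \partial B_r) = 0$, so $P(F) \leq P(F_1) - p(r) + \int_{E \cap \partial B_r} h \, d\H^{N-1}$. Using $h \leq M$ and $\H^{N-1}(E \cap \partial B_r) \leq M(-u'(r))$ (from $f \geq 1/M$), this yields $P(F) \leq P(E) + C u(r)^{(N-1)/N} - p(r) + M^2(-u'(r))$. The isoperimetric property of $E$ gives $P(E) \leq P(F)$, whence
$$p(r) \leq C u(r)^{(N-1)/N} + M^2(-u'(r)).$$
Now apply the classical Euclidean isoperimetric inequality to $E \setminus B_r$. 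Using the bounds on $h$ and $f$, its Euclidean perimeter is at most $M p(r) + M(-u'(r))$, while its Euclidean volume is at least $u(r)/M$. Hence
$$c_N M^{-(N-1)/N} u(r)^{(N-1)/N} \leq M p(r) + M(-u'(r)).$$
Substituting the preceding bound on $p(r)$ produces
$$\bigl[c_N M^{-(N-1)/N} - MC\bigr] u(r)^{(N-1)/N} \leq (M^3 + M)(-u'(r)),$$
with strictly positive coefficient by the choice of $C$. Dividing by $u^{(N-1)/N}$ (wherever $u>0$), this is equivalent to $(u^{1/N})'(r) \leq -\kappa$ for some constant $\kappa > 0$ and a.e.~$r$ large; integrating, $u^{1/N}$ must hit $0$ at a finite $r_*$, so $E$ is bounded.

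The crucial obstacle is the choice of $C$: the argument only closes if $C$ can be taken smaller than the explicit threshold $c_N M^{-(2N-1)/N}$. This is exactly what the strengthened version of the $\eps-\eps^{(N-1)/N}$ property grants, so the arbitrary-smallness hypothesis on $C$ is indispensable. The other delicate points — the interaction between $\partial B_r$, the reduced boundary $\partial^* E$, and the $\eps-\eps^\beta$-modification inside $B$ — are handled by working with a.e.~$r$ and ensuring that the modification of $F_1$ lies strictly inside $B_r$.
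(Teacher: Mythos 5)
Your proof is correct and follows essentially the same strategy as the paper: define $u(r)=|E\setminus B_r|$, use the $\eps-\eps^{(N-1)/N}$ property with a constant $C$ below the explicit threshold $N\omega_N^{1/N}M^{-(2N-1)/N}$ to build the competitor $F_1\cap B_r$, compare perimeters, invoke the Euclidean isoperimetric inequality for $E\setminus B_r$, and integrate the resulting differential inequality for $u^{1/N}$. The only cosmetic difference is that you track the relative perimeter $p(r)=P(E;\R^N\setminus\overline{B_r})$ and bound $P_{\rm Eucl}(E\setminus B_r)$ from above via $p(r)$ plus a spherical boundary term, whereas the paper works directly with $P(E\setminus B_R)$ and bounds it from below by $P_{\rm Eucl}(E\setminus B_R)/M$; the bookkeeping differs slightly but the argument and the role of each hypothesis are the same.
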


Notice that this result, paired with Theorem~\ref{thm:e-ebeta}, ensures the boundedness of isoperimetric sets in a wide generality, i.e., whenever $f$ and $h$ are bounded and are away from $0$ -- that is, (\ref{eq:bounds}) holds -- and $h$ is continuous in the spatial variable.

\begin{thmmain}[Regularity of isoperimetric sets]\label{thm:regularity}
Assume that $f$ and $h$ are locally bounded. Then, any isoperimetric set is porous (see Definition~\ref{defrpp}), and its reduced boundary coincides $\H^{N-1}$-a.e. with its topological boundary. In addition, if $h=h(x)$ is locally $\alpha$-H\"older for some $\alpha\in (0,1]$, then $\partial^*E$ is ${\rm C}^{1, \eta}$, where
\begin{equation}\label{defeta}
\eta = \eta(N, \alpha) = \frac{\alpha}{2N(1-\alpha)+2\alpha}\,.
\end{equation}
\end{thmmain}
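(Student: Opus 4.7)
My plan attacks the theorem in two layers. The first layer---porosity of $E$ together with the identification $\partial E = \partial^* E$ modulo $\H^{N-1}$-null sets---reduces to uniform two-sided Euclidean density estimates at every point of $\partial E$, which I derive from the $\eps-\eps^{(N-1)/N}$ property supplied by Theorem~\ref{thm:e-ebeta}. The second layer---the ${\rm C}^{1,\eta}$ regularity when $h=h(x)$ is $\alpha$-Hölder---uses the sharper $\eps-\eps^{\beta(N,\alpha)}$ property to extract a $(\Lambda,r_0)$-quasi-minimality inequality for $E$, and then invokes the classical regularity theorem of Tamanini for quasi-minimizers of the Euclidean perimeter.

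For the density estimates, I would fix $x_0\in\partial E$ and set $v(r):=|E\cap B_r(x_0)|_{\rm Eucl}$, so that $v'(r)=\H^{N-1}(E\cap\partial B_r(x_0))$ for a.e.~$r$. Applying the $\eps-\eps^{(N-1)/N}$ property in a ball $B_{\rm aux}$ disjoint from $B_r(x_0)$ produces, for $\delta:=|E\cap B_r(x_0)|_f\leq Mv(r)$, a set $F'$ with $|F'|-|E|=\delta$ and $P(F')\leq P(E)+C\delta^{(N-1)/N}$. The competitor $F:=F'\setminus B_r(x_0)$ satisfies $|F|=|E|$, and combining $P(E)\leq P(F)$ with the explicit formula for $P(F)$ yields
\[
\int_{\partial^*E\cap B_r(x_0)}\! h\,d\H^{N-1}
\leq \int_{E\cap\partial B_r(x_0)}\! h\,d\H^{N-1} + C\bigl(Mv(r)\bigr)^{(N-1)/N}.
\]
The bounds on $h$ together with the Euclidean relative isoperimetric inequality $\H^{N-1}(\partial^*E\cap B_r(x_0))+v'(r)\geq c_N v(r)^{(N-1)/N}$ turn this into a differential inequality $\bigl(c_N/M-CM^{(N-1)/N}\bigr)v(r)^{(N-1)/N}\leq (M+1/M)\,v'(r)$; the arbitrarily-small-constant clause of Theorem~\ref{thm:e-ebeta} makes the coefficient on the left strictly positive, so ODE comparison gives $v(r)\geq cr^N$ for $r$ small. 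The symmetric argument for $E^c$ gives $|B_r(x_0)\setminus E|_{\rm Eucl}\geq cr^N$. From these two-sided density estimates, $\H^{N-1}(\partial E\setminus\partial^*E)=0$ follows by the classical density criterion, and porosity follows by a contradiction argument: if $B_r(x_0)$ contained no Euclidean $\delta r$-ball inside $E^c$, then $B_r(x_0)\setminus E$ would lie in a $\delta r$-neighborhood of a set of $\H^{N-1}$-measure at most $Cr^{N-1}$ (the quasi-minimizer perimeter bound plus $\H^{N-1}(\partial B_r(x_0))$), hence have Lebesgue measure at most $C'\delta r^N$, contradicting the density estimate for $\delta$ small enough.

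For the regularity statement, I take any competitor $F$ with $F\Delta E\comp B_r(x_0)$, observe $|\eps|:=\bigl||F|-|E|\bigr|\leq c_0 r^N$, and apply the $\eps-\eps^{\beta}$ property with $\beta=\beta(N,\alpha)$ in a ball disjoint from $B_r(x_0)$; together with the isoperimetric minimality of $E$ this gives
\[
P_h(E)\leq P_h(F)+C_1 r^{N\beta}=P_h(F)+C_1 r^{N-1+\gamma},
\qquad
\gamma:=\frac{\alpha}{N-(N-1)\alpha}=N\beta(N,\alpha)-(N-1),
\]
so $E$ is a $(\Lambda,r_0)$-quasi-minimizer of $P_h$ with exponent $\gamma$. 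Since $h$ depends only on $x$ and $|h(x)-h(x_0)|\leq Cr^\alpha$ on $B_r(x_0)$, freezing $h$ at $x_0$ and using the perimeter bound $P_{\rm Eucl}(\cdot;B_r(x_0))\leq Cr^{N-1}$ (automatic for $E$ and available for any efficient $F$) converts this into the Euclidean quasi-minimality $P_{\rm Eucl}(E;B_r(x_0))\leq P_{\rm Eucl}(F;B_r(x_0))+C_2 r^{N-1+\min(\gamma,\alpha)}$. For $\alpha\in(0,1]$ one has $\gamma\leq\alpha$, so the effective exponent is $\gamma$, and Tamanini's theorem yields $\partial^*E\in{\rm C}^{1,\gamma/2}$; a direct computation shows $\gamma/2$ coincides with the $\eta$ of~\eqref{defeta}.

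The main obstacle I anticipate is the density estimate: the ODE argument genuinely relies on the arbitrarily-small-constant clause of Theorem~\ref{thm:e-ebeta}, so one must take care that this clause is invokable under the bare local-boundedness hypothesis of the first part of the theorem, possibly through a local approximation of $h$ or a sharpened reading of the hypotheses. A secondary bookkeeping point is that at every application of the $\eps-\eps^\beta$ property the auxiliary compensation ball must be chosen disjoint from $B_r(x_0)$ and meeting $\partial^*E$ in positive $\H^{N-1}$-measure, which is always possible for a non-degenerate $E$ and $r$ small.
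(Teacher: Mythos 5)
Your second layer is essentially the paper's argument and is correct: applying the $\eps-\eps^{\beta}$ property in a ball disjoint from $B_r(x)$ gives $P(E)\leq P(F)+C|\eps|^\beta\leq P(F)+C' r^{N\beta}$ for every competitor with $F\Delta E\comp B_r(x)$; freezing $h=h(x)$ at a point via the $\alpha$-H\"older bound and using the quasi-minimal perimeter bound converts this into Euclidean $\omega$-minimality with $\omega(r)=\overline C r^{N\beta-(N-1)}$, and $N\beta-(N-1)=2\eta$. The paper phrases this as $\omega$-minimality and quotes Theorem~\ref{thm:stdreg} rather than a $(\Lambda,r_0)$-minimizer statement, but the content is the same, and your exponent bookkeeping ($\gamma=\alpha/(N-(N-1)\alpha)=2\eta$, $\gamma\leq\alpha$) checks out.

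The first layer has a genuine gap, and it is exactly the one you flag. Your differential inequality carries the coefficient $c_N/M-CM^{(N-1)/N}$, so you need the constant $C$ of the $\eps-\eps^{(N-1)/N}$ property to lie below an explicit threshold depending on $M$ and $N$. The arbitrarily-small-constant clause of Theorem~\ref{thm:e-ebeta} is proved only under \emph{continuity} of $h$ in the spatial variable, whereas the first part of Theorem~\ref{thm:regularity} assumes only local boundedness; the paper states explicitly (Step~IX of the proof of Theorem~\ref{thm:e-ebeta}) that the small-constant improvement is \emph{false} for generic locally bounded $h$, so no ``local approximation'' or ``sharpened reading'' will rescue the ODE argument. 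Since your porosity argument also quotes the lower volume density bound, the gap infects both conclusions of the first layer. The paper avoids the issue entirely: it never proves a lower density estimate, but only the upper perimeter bound $P_{\rm Eucl}(E,B_r(x))\leq Kr^{N-1}$ (this is what Definition~\ref{defrpp} calls quasi-minimality), which needs the $\eps-\eps^{(N-1)/N}$ property only with some fixed, possibly large, constant --- one deletes $E\cap B_r(x)$ and restores the volume $\eps\leq M\omega_N r^N$ in a far-away ball, paying $N\omega_N M r^{N-1}+C\eps^{(N-1)/N}\lesssim r^{N-1}$ --- and then invokes the standard regularity package (Theorem~\ref{thm:stdreg}, after~\cite{DS,GG,KKLS,Tam}) which derives porosity and $\H^{N-1}(\partial E\setminus\partial^*E)=0$ from quasi-minimality. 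To repair your proof, replace the two-sided density estimates by this upper-bound-only quasi-minimality argument (or restrict the first statement to continuous $h$, which would weaken the theorem).
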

This regularity result is not sharp. In particular, in the $2$-dimensional case a higher regularity is shown in~\cite{CP2} for the case of single density. Such an improved regularity in the case of a double density and as well in the case $h=h(x, \nu)$ will be addressed in the forthcoming paper~\cite{PSu}.\par

Notice that, since $f$ and $h$ are l.s.c. and positive, they are always locally away from zero. Thus, in Theorems~\ref{thm:e-ebeta} and~\ref{thm:regularity}, $f$ and $h$ are locally bounded and locally away from zero, while in Theorem~\ref{thm:boundedness} these requests are made globally. These assumptions are essentially sharp. They can be slightly relaxed to the case of ``essentially bounded'' or ``essentially $\alpha$-H\"older'' functions as defined in~\cite[Definitions~1.6 and~1.7]{CP}. Loosely speaking, this relaxation allows the densities to take the values $0$ and $+\infty$ in a locally finite fashion. The very same can also be done in the present case. We preferred not to specify it in the claims, since on the one hand this generalisation makes the claims much less clear at first sight, and on the other hand it is a trivial generalisation once the proof has been completed.

The plan of the paper is as follows. In Section~\ref{sec:sketch} we give a short sketch of the proof of Theorem~\ref{thm:e-ebeta}, to give an idea of how the construction works and to explain where does the constant $\beta$ in~(\ref{defbeta}) come from. In Section~\ref{sec:prel} we recall some basic definitions and properties of sets of finite perimeter. In Section~\ref{sec:e-ebeta} we shall prove Theorem~\ref{thm:e-ebeta}, which we exploit in Section~\ref{sec:applications} to prove Theorem~\ref{thm:boundedness} and Theorem~\ref{thm:regularity}.

\subsection{A quick sketch of the proof of Theorem~\ref{thm:e-ebeta}\label{sec:sketch}}

Among the three main theorems of this paper, the first one is by far the hardest, and its proof is quite technical. Nevertheless, the overall idea is quite simple, and we outline it in this short section. In particular, the meaning of the value of $\beta$ in~(\ref{defbeta}) will appear as natural. Let us consider a very simplified situation, namely, we assume that the set $E$ is smooth. Of course, the main difficulty of the real proof is exactly to make everything precise also when dealing with non-smooth parts of the boundary. 
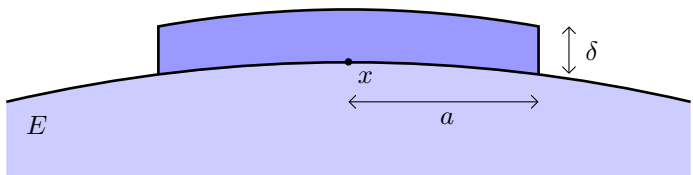
\begin{figure}[h!]
\begin{tikzpicture}
\draw[white] (-0.5,-1.2) rectangle (9.5,1.8);
\fill[blue!40!white] (2.0,0.37) -- (2.0,0.99) .. controls (3.66,1.29) and (5.33,1.29).. (7,0.99) -- (7,0.37);
\draw[line width=1] (2,0.37) -- (2,1) .. controls (3.66,1.3) and (5.33,1.3).. (7,1) -- (7,0.37);
\fill[blue!20!white] (0,-.01) .. controls (3,0.68) and (6,0.68) .. (9,-.01) -- (9,-1) -- (0,-1) -- cycle;
\draw[line width=1] (0,0) .. controls (3,0.7) and (6,0.7) .. (9,0);
\fill (4.5,0.53) circle (1.4pt);
\draw (0.4,-.3) node{$E$};
\draw (4.5,0.53) node[anchor=north west]{$x$};
\draw (5.8,-0.22) node{$a$};
\draw (7.7,.68) node{$\delta$};
\draw[angle 90-angle 90] (4.5,0) -- (7,0);
\draw[angle 90-angle 90] (7.4,0.37) -- (7.4,1);
\end{tikzpicture}

\caption{Sketch of the proof of Theorem~~\ref{thm:e-ebeta}.}\label{figsar}
\end{figure}

Let $x$ be a point of $\partial E$, and let us assume to fix the ideas that the outer normal vector to $\partial E$ at $x$ is vertical. Let $a$ be a very small quantity, and let us define $F$ by translating vertically of a lenght $\delta$ the part of $\partial E$ which is at distance less than $a$ from $x$, as depicted in Figure~\ref{figsar}. Since the difference between the volumes of $E$ and $F$ must be $\eps$, and since the density $f$ is locally bounded above by hypothesis and below by positivity and l.s.c., we obtain that
\begin{equation}\label{appbnd}
a^{N-1} \delta \approx \eps\,.
\end{equation}
Let us then evaluate the difference between the perimeters of $E$ and $F$. The boundary of $F$ coincides with the boundary of $E$ except for a ``vertical part'' (the two vertical segments in the figure) and for the fact that a ``horizontal'' piece of the boundary has been translated. The extra part consists of two segments of length $|\delta|$ in dimension $N=2$, while in general it is the lateral boundary of a cylinder of radius $a$ and height $|\delta|$, hence the perimeter behaves like $a^{N-2} |\delta|$. Finally, the part which is translated has $(N-1)$-dimensional measure of order $a^{N-1}$. Since it has been vertically translated of a length $\delta$, so in particular the normal vector remains the same, the $\alpha$-H\"older property of $h$ in the spatial variable ensures that the difference in perimeter is at most of order $a^{N-1} |\delta|^\alpha$. Hence, up to a multiplicative constant we can estimate
\[
P(F)-P(E) \leq a^{N-2}|\delta| + a^{N-1}|\delta|^\alpha\,.
\]
Optimising the choice of $a$ and $\delta$ subject to the constraint~(\ref{appbnd}), we select $a=|\eps|^\gamma$ with
\[
\gamma =\frac{1-\alpha}{\alpha+N(1-\alpha)}\,,
\]
from which the above estimate becomes precisely $P(F)-P(E)\leq |\eps|^\beta$ with $\beta$ given by~(\ref{defbeta}).

\subsection{Some properties of sets of locally finite perimeter\label{sec:prel}}

In this short section we recall some basic properties of sets of finite perimeter. A complete reference for the subject is for instance the book~\cite{AFP}, however the few things listed below make the present paper self-contained. In this section, whenever we write perimeter we mean the Euclidean one, while by volume the standard $N$-dimensional Lebesgue measure.\par

We say that a Borel set $E\subseteq\R^N$ is a \emph{set of locally finite perimeter} if its characteristic function $\Chi{E}$ is a $BV_{\rm loc}$ function. The \emph{reduced boundary} $\partial^* E$ of a Borel set $E\subseteq\R^N$ is the collection of points $x\in\R^N$ such that a direction $\nu(x)\in\S^{N-1}$ exists (and then it is necessarily unique) such that, calling
\[
B_r^\pm(x)= \big\{y\in\R^N,\, |y-x|< r,\, (y-x) \cdot \nu(x)\gtrless 0\big\}\,,
\]
one has
\begin{align*}
\lim_{r\searrow 0} \frac{|B_r^+(x)\cap E|_{\rm Eucl}}{|B_r^+(x)|_{\rm Eucl}} =0\,, &&
\lim_{r\searrow 0} \frac{|B_r^-(x)\cap E|_{\rm Eucl}}{|B_r^-(x)|_{\rm Eucl}} =1\,.
\end{align*}
We call the direction $\nu(x)$ the \emph{outer normal of $E$ at $x$}. A remarkable fact is that $E$ has locally finite perimeter if and only if $\partial^* E$ has locally finite $(N-1)$-dimensional Hausdorff measure $\H^{N-1}$. Moreover, for any set $E$ of locally finite perimeter one has
\[
\mu_E := D\Chi{E} = - \nu(x) \H^{N-1}\res \partial^* E\,.
\]
In particular, one defines the perimeter of any set by $P_{\rm Eucl}(E) = \H^{N-1}(\partial^* E)=|\mu_E|(\R^N)$, and a set has locally finite perimeter if and only if for any ball $B$, $E\cap B$ has finite perimeter. Obviously, whenever $E$ is regular enough, this notion of perimeter coincides with the classical one, and the reduced boundary $\partial^* E$ coincides with the topological boundary $\partial E$ up to $\H^{N-1}$-negligible subsets.\par
Another useful characterization of the boundary is the following. We say that a set $E\subseteq\R^N$ has \emph{density $d$ at a point $x\in\R^N$} if
\[
\lim_{r\searrow 0} \frac{|B_r^+(x)\cap E|_{\rm Eucl}}{|B_r^+(x)|_{\rm Eucl}} = d\,,
\]
and we call $E^d$ the collection of all the points with density $d$. By Lebesgue Theorem, $E=E^1$ and $\R^N\setminus E=E^0$ up to $\H^N$-negligible subsets, so that $\H^N\big(\R^N\setminus (E^0\cup E^1)\big)=0$. A stronger fact holds true, namely, a set $E$ has finite perimeter if and only if $\H^{N-1}\big(\R^N\setminus (E^0\cup E^1)\big)$ is finite, and moreover the three sets $\partial^* E,\, E^{1/2}$, and $\R^N\setminus(E^0\cup E^1)$ coincide up to $\H^{N-1}$-negligible subsets. Thus, the $(N-1)$-dimensional Hausdorff measure of each of these can be equivalently used as a generalised notion of perimeter.

Let us now conclude this section by listing two fundamental, well-known results about sets of locally finite perimeter. 

\begin{thm}[Blow-up]\label{thm:blowup}
Let $E\subseteq \R^N$ and let $x\in \partial^*E$. For every $\eps>0$, let $E_\eps := \frac 1\eps (E-x)$ be the blow-up set at $x$, write for the sake of brevity $\mu_\eps := \mu_{E_\eps}$ and call $H_x=\{y\in \R^N \,:\, y\cdot \nu(x)=0\}$ the tangent hyper-plane to $E$ at $x$, and $H_x^-=\{y\in \R^N \,:\, y\cdot \nu(x)<0\}$. Then, as $\eps \to 0^+$ the sets $E_\eps$ converge in the $L^1_{loc}$-sense to the half-space $H_x^-$, while the measures $\mu_\eps$ (resp. $|\mu_\eps|$) converge in the weak$^*$ sense to the measure $\nu_E(x) \H^{N-1} \res \partial H_x^-$ (resp. $ \H^{N-1} \res \partial H_x^-$).
\end{thm}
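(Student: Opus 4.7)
The plan is to derive the three convergences --- of sets, signed measures, and total variations --- from the density-based definition of $\partial^*E$ by combining scaling with BV compactness and the lower semicontinuity of perimeter.

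The $L^1_{\rm loc}$ convergence is essentially built into the definition. The substitution $y=x+\eps z$ gives
\[
\frac{|E_\eps\cap B_r^\pm(0)|_{\rm Eucl}}{|B_r^\pm(0)|_{\rm Eucl}}=\frac{|E\cap B_{\eps r}^\pm(x)|_{\rm Eucl}}{|B_{\eps r}^\pm(x)|_{\rm Eucl}}\,,
\]
which tends to $0$ and to $1$ respectively as $\eps\to 0$ by the defining property of $\partial^*E$. Since $B_r(0)$ is the essentially disjoint union $B_r^+(0)\cup B_r^-(0)$, this yields $|(E_\eps\triangle H_x^-)\cap B_r|_{\rm Eucl}\to 0$ for every $r>0$.

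For the measure convergence I would use the scaling identities
\[
\mu_{E_\eps}(A)=\eps^{1-N}\mu_E(x+\eps A),\qquad |\mu_{E_\eps}|(A)=\eps^{1-N}|\mu_E|(x+\eps A),
\]
together with the classical upper density bound $|\mu_E|(B_\rho(x))\le C\rho^{N-1}$ at reduced-boundary points --- a consequence of the relative isoperimetric inequality applied in balls around $x$, once the density conditions have forced $x\in E^{1/2}$. This secures a uniform bound $|\mu_{E_\eps}|(B_R)\le CR^{N-1}$ on every $B_R$, and Banach--Alaoglu produces a subsequence along which $\mu_{E_{\eps_k}}\rightharpoonup^*\sigma$ and $|\mu_{E_{\eps_k}}|\rightharpoonup^*\lambda$. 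The continuity of the distributional gradient under $L^1_{\rm loc}$-convergence, applied to the set convergence already proved, forces $\sigma=\mu_{H_x^-}$, and lower semicontinuity of perimeter gives $\H^{N-1}\res\partial H_x^-=|\mu_{H_x^-}|\le\lambda$.

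The reverse inequality $\lambda\le\H^{N-1}\res\partial H_x^-$ is the delicate point. I would test on thin cylinders $C_r$ transverse to $\nu_E(x)$ on which $\lambda(\partial C_r)=0$, exploit $|\sigma|\le\lambda$, and use the set convergence to exclude any concentration of $\lambda$ strictly inside either open half-cylinder. Once $\lambda=\H^{N-1}\res\partial H_x^-$ is established, the uniqueness of the limits upgrades the subsequential convergence to the full limit $\eps\to 0^+$. The main obstacle is precisely this last step: it is essentially equivalent to the density theorem $\lim_{\rho\to 0}\rho^{1-N}|\mu_E|(B_\rho(x))=\omega_{N-1}$, which is the technical heart of the classical blow-up result of De~Giorgi--Federer.
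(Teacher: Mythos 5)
A preliminary remark: the paper does not prove this statement. It is quoted as a classical result (De Giorgi's blow-up theorem together with Federer's characterisation of the reduced boundary) with a pointer to the reference [AFP], so there is no internal proof to compare yours with; I can only assess your sketch on its own merits. The $L^1_{\rm loc}$ part and the compactness set-up are fine: with the density-based definition of $\partial^*E$ adopted in the paper, the convergence $E_\eps\to H_x^-$ really is immediate from the scaling you write, and the identities for $\mu_{E_\eps}$ and $|\mu_{E_\eps}|$ are correct. However, your justification of the upper density bound $|\mu_E|(B_\rho(x))\le C\rho^{N-1}$ is wrong: the relative isoperimetric inequality controls $\min\{|E\cap B_\rho|_{\rm Eucl},|B_\rho\setminus E|_{\rm Eucl}\}^{(N-1)/N}$ from above by $C|\mu_E|(B_\rho)$, i.e.\ at points of $E^{1/2}$ it yields the \emph{lower} perimeter density bound, not the upper one. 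The upper bound comes from the measure-theoretic normal condition defining the De Giorgi reduced boundary, namely $|\mu_E(B_\rho(x))|/|\mu_E|(B_\rho(x))\to 1$, combined with the identity $\mu_{E\cap B_\rho}(\R^N)=0$, which give $|\mu_E|(B_\rho)\le (1+o(1))\,|\mu_E(B_\rho)|\le (1+o(1))\,\H^{N-1}(\partial B_\rho)$.

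This is not a cosmetic issue, because the step you yourself flag as delicate cannot be closed from the set convergence alone: vanishing volume is compatible with non-vanishing perimeter, so testing on cylinders and using $E_\eps\to H_x^-$ does not exclude concentration of $\lambda$ in the open half-spaces. Concretely, let $E\subseteq\R^2$ be the lower half-plane together with, for each large $k$, $2^k$ pairwise disjoint discs of radius $4^{-k}/4$ centred on the segment $\{|x_1|\le 2^{-k},\ x_2=2^{-k}\}$. The origin satisfies both density conditions in the paper's definition of $\partial^*E$, $E$ has locally finite perimeter, and $E_\eps\to H^-$ in $L^1_{\rm loc}$ with $\mu_\eps\rightharpoonup^*\mu_{H^-}$ (the vector measures of the shrinking discs cancel), yet $|\mu_\eps|(B_2)$ stays bounded away from $|\mu_{H^-}|(B_2)$ along a sequence $\eps\to0$, so the $|\mu_\eps|$ part of the statement fails there. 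The theorem is of course valid at every point of the De Giorgi reduced boundary, hence $\H^{N-1}$-a.e.\ on the set the paper calls $\partial^*E$, which is all the paper ever uses. The missing ingredient in your argument is therefore precisely that reduced-boundary information: once one knows $|\mu_\eps(B_R)|\ge(1-o(1))\,|\mu_\eps|(B_R)$, the already established weak$^*$ convergence of $\mu_\eps$ gives $\limsup_\eps|\mu_\eps|(B_R)\le|\mu_{H_x^-}(B_R)|=|\mu_{H_x^-}|(B_R)$ for a.e.\ $R$, which combined with lower semicontinuity yields convergence of the total variations and hence the weak$^*$ convergence of $|\mu_\eps|$. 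Without some substitute for this step the proof does not close, and under the hypotheses as you use them the claim to be proved is actually false at exceptional points.
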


\begin{defin}[Vertical and horizontal sections]
Given a Borel set $E\subseteq \R^N$, we define its vertical section at level $y\in \R^{N-1}$, $E_y$, and its horizontal section at level $t\in \R$, $E^t$, respectively as
\begin{align*}
E_y := \{t\in \R\,:\, (y,t)\in E\}\,, && E^t := \{y\in \R^{N-1}\,:\, (y,t)\in E\}\,.
\end{align*}
\end{defin}

The following theorem, which goes by the name of \emph{Vol'pert Theorem} states that almost all (w.r.t. the proper dimensional Hausdorff measure) vertical sections and horizontal sections of sets of locally finite perimeter are of finite perimeter. Moreover, the reduced boundary of the sections coincides with the sections of the reduced boundary. The proof for vertical sections can be found in~\cite{AFP, Vol}, while for horizontal ones in~\cite{BCF, Fus, FMP}.

\begin{thm}[Vol'pert]\label{thm:Volpert}
Let $E$ be a set of locally finite perimeter. Then, for $\H^{N-1}$-a.e. $y\in \R^{N-1}$ the vertical section $E_y$ is a set of finite perimeter in $\R$, and $\partial^*(E_y) = (\partial^* E)_y$. Analogously for the horizontal sections, i.e. for $\H^1$-a.e. $t\in \R$ the horizontal section $E^t$ is a set of finite perimeter in $\R^{N-1}$, and $\partial^*(E^t) = (\partial^* E)^t$ up to an $\H^{n-2}$-negligible set.
\end{thm}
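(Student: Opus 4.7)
The plan is to deduce both statements from the slicing theory of $BV$ functions applied to the characteristic function $\chi_E\in BV_{\rm loc}(\R^N)$. The vertical case is the ``easy'' one because its slices are one-dimensional and fall under the classical one-dimensional slicing theorem for $BV$ functions; the horizontal case requires a more delicate codimension-one argument based on a Fubini/coarea inequality together with a blow-up identification of the reduced boundary of the slices.

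For the vertical sections, I would fix the direction $e_N$, denote by $y\in\R^{N-1}$ the projection onto the first $N-1$ coordinates, and invoke the classical one-dimensional slicing theorem for $BV$ functions. Since $\chi_E\in BV_{\rm loc}(\R^N)$, disintegrating the partial derivative $D_N\chi_E$ along the projection $(y,t)\mapsto y$ yields, for $\H^{N-1}$-a.e. $y\in\R^{N-1}$, that the slice $t\mapsto \chi_E(y,t)=\chi_{E_y}(t)$ belongs to $BV(\R)$, whence $E_y$ has finite perimeter. The equality $\partial^*(E_y)=(\partial^* E)_y$ then amounts to the fact that in one dimension the reduced boundary of a $BV$ set is exactly its set of jump points, which, by the slicing formula, coincide with the transverse intersections of the vertical line $\{y\}\times\R$ with $\partial^* E$.

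For the horizontal sections I would start from the Fubini/coarea-type inequality
\[
\int_\R P_{\rm Eucl}(E^t)\,dt \leq P_{\rm Eucl}(E),
\]
proved on every ball, which already gives that $E^t$ has finite perimeter in $\R^{N-1}$ for $\H^1$-a.e. $t$. The cleanest way to establish this bound is to approximate $\chi_E$ in $L^1$ by smooth functions $u_k$, for which it is trivial by Fubini since $|\nabla u_k|\geq|\nabla_{x'} u_k|$, and then to pass to the limit using the lower semicontinuity of the $BV$ norm. To identify $\partial^*(E^t)$ with $(\partial^* E)^t$ up to $\H^{N-2}$-null sets I would resort to a blow-up argument: at a point $(z,t)\in(\partial^* E)^t$ whose outer normal $\nu_E(z,t)$ is not parallel to $e_N$, the blow-up of $E$ is a half-space meeting $\{x_N=t\}$ transversally in a half-space of $\R^{N-1}$, which forces $z\in\partial^*(E^t)$; the converse inclusion comes from the same blow-up analysis performed on the slice and then compared, by Theorem~\ref{thm:blowup}, with the blow-up of $E$ itself.

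The main obstacle is precisely the horizontal case, and within it the control of the ``bad'' levels $t$ where $\{x_N=t\}$ fails to be transverse to $\partial^* E$ on a non-negligible portion. Ruling these out amounts to showing that the set of levels $t$ for which $\nu_E(\cdot,t)\cdot e_N$ has modulus one on a set of positive $\H^{N-2}$-measure is itself $\H^1$-negligible, which is the delicate content of \cite{BCF, Fus, FMP}. The vertical case, in contrast, fits entirely inside the classical one-dimensional $BV$ slicing package of \cite{AFP, Vol}, which is why the two parts of the statement are proved via essentially disjoint techniques in the literature.
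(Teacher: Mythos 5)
The paper does not prove Vol'pert's theorem: it is stated as a background result, with the vertical case attributed to \cite{AFP, Vol} and the horizontal case to \cite{BCF, Fus, FMP}, so there is no in-paper argument to compare your sketch against. On its own merits, your outline correctly separates the two mechanisms. The vertical case is indeed the classical one-dimensional $BV$ slicing theorem: disintegrating $D_N\chi_E$ over the projection $(y,t)\mapsto y$ gives $\chi_{E_y}\in BV(\R)$ for $\H^{N-1}$-a.e.\ $y$, and in one dimension the reduced boundary of a finite-perimeter set is its jump set, which the slicing identifies with $(\partial^*E)_y$. The horizontal case also has the right skeleton: the Fubini-type bound $\int_\R P_{\rm Eucl}(E^t)\,dt\leq |D_{x'}\chi_E|(\R^N)\leq P_{\rm Eucl}(E)$, obtained by mollifying $\chi_E$ and using lower semicontinuity, gives finiteness of $P_{\rm Eucl}(E^t)$ for a.e.\ $t$, and a blow-up comparison is the natural route to $\partial^*(E^t)=(\partial^*E)^t$.

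One point you flag but should state more sharply, because as written it reads like a complete argument: at a point $(z,t)\in(\partial^*E)^t$ with $\nu_E(z,t)$ not parallel to $e_N$, $L^1_{\rm loc}$ convergence of the rescaled sets $\frac1\eps(E-(z,t))$ to a half-space does \emph{not} by itself give $L^1_{\rm loc}$ convergence of the fixed-level slices $\frac1\eps(E^t-z)$, since a single slice of an $L^1_{\rm loc}$-convergent sequence need not converge. One must first pass to a full-measure set of good levels $t$ (Lebesgue-point/Vol'pert-type conditions on $|D\chi_E|$ and on $\nu_E\cdot e_N$) before the ambient blow-up controls the slice blow-up; this, together with discarding the levels where $\nu_E\cdot e_N=\pm1$ on a set of positive $\H^{N-2}$-measure, is exactly the content of \cite{BCF, Fus, FMP} and the reason the horizontal identification only holds up to $\H^{N-2}$-null sets. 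Your sketch names the difficulty correctly but does not close it; since the paper itself defers to the literature here, that is an acceptable level of detail, provided you make explicit that the ``forces $z\in\partial^*(E^t)$'' step is conditional on restricting to good levels.
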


To fully understand the meaning of the Vol'pert Theorem, it is useful to consider what follows. The set $\partial^* E$ is a $(N-1)$-dimensional set, so for $\H^1$-almost every $t\in\R$ the section $(\partial^* E)^t$ is well defined up to $\H^{N-2}$-negligible subsets. Concerning $\partial^*( E^t)$, this is well-defined whenever the section $E^t$ is a well-defined $(N-1)$-dimensional set. However since $E$ has locally finite perimeter, $\H^{N-1}$-a.e. point of $\R^N$ has density either $1$, or $0$, or $1/2$. Moreover, the points of density $1/2$ have locally finite $\H^{N-1}$-measure. Consequently, only countably many sections $E^t$ carry a strictly positive $\H^{N-1}$-measure of points with density $1/2$. Hence, for a.e. $t\in\R$, $\H^{N-1}$-almost every point of $\R^N$, whose last coordinate equals $t$, has either density $0$ or $1$. For these $t$, the section $E^t$ is univocally defined, and thus the claim of Vol'pert Theorem makes perfect sense. A fully analogous consideration holds for vertical sections.

\section{Proof of the \texorpdfstring{$\eps-\eps^\beta$}{eps-eps to the beta} property}\label{sec:e-ebeta}

This whole section is devoted to the proof of the $\eps-\eps^\beta$ property. This is quite involved, thus for the sake of clarity we split it in several steps.

\begin{proof}[Proof of Theorem~\ref{thm:e-ebeta}]
Let $E\subseteq\R^N$ be a set of locally finite perimeter, and let $B\subseteq\R^N$ be a ball such that $\H^{N-1}(B\cap \partial^* E)>0$. Since $f$ and $h$ are locally bounded and l.s.c., there exists some constant $M>0$ such that
\begin{align*}
\frac 1M \leq f(y) \leq M\,, && \frac 1M \leq h(y, \nu) \leq M\,,&& \forall\,y\in B\,,\,\nu\in\S^{N-1}\,.
\end{align*}
Let $x\in\partial^* E\cap B$ be any point of the reduced boundary of $E$ in $B$. Up to a rotation and a translation, we may assume that $x=0$ and that $\nu_E(x)= (0,1)\in\R^{N-1}\times\R$.

\step{I}{Choice of the reference cube and the ``good'' part $G$.}
First of all, we let $\rho=\rho(M,N)>0$ be a small parameter, only depending on $M$ and on $N$, which will be chosen later on. As a direct application of the blow-up Theorem~\ref{thm:blowup} we obtain the existence of an arbitrarily small constant $a>0$ such that, calling $Q^N=(-a/2,a/2)^N$ and $Q=(-a/2,a/2)^{N-1}$ the $N$-dimensional and the $(N-1)$-dimensional cubes of side $a$, and letting $x=(x',x_N) \in \R^{N-1}\times \R$ one has
\begin{gather}
1-\rho \leq \frac{\H^{N-1}(\partial^* E \cap Q^N \cap \{-a\rho < x_N < a\rho\})}{a^{N-1}} \leq 1+\rho\,, \label{eq:1} \\
\H^{N-1}(\partial^* E \cap Q^N \setminus \{-a\rho < x_N < a\rho\}) \leq \rho \,a^{N-1}\,, \label{eq:2} \\
\H^N\Big( \big(E\Delta \{x_N<0\}\big)\cap Q^N\Big) < \rho^2 \,a^N\,,\label{eq:3bis}\\
\frac{\H^{N-2}\Big(\partial^* E \cap \partial Q^N\Big)}{2(N-1)a^{N-2}}\leq 1+\rho\,.\label{eq:4}
\end{gather}
In fact, the blow-up Theorem ensures the first three properties for every small $a$, and the validity of the last one for some arbitrarily small value of $a$ is then clear by integration. Notice that $a$ depends only on $\rho$ and on $E$, so ultimately $a=a(M, N, E)$. Observe that, again in view of the blow-up Theorem~\ref{thm:blowup}, inside the cube $Q^N$ one has to expect the set $E$ to be very close to the lower half-cube. As a consequence, a ``standard'' vertical section of $E$ inside $Q^N$ should be close to the segment $(-a/2,0)$. Writing then for brevity $E_{x'}^Q=E_{x'}\cap Q^N$ and $\partial^* E_{x'}^Q=\partial^* E_{x'}\cap Q$, we define then $G\subseteq Q$ the ``good'' sections, that is,
\[
G:= \Big\{x'\in Q:\, \partial^*(E_{x'})=(\partial^* E)_{x'},\ \#\big(\partial^* E_{x'}^Q\big)=1,\ \partial^*E_{x'}^Q\subseteq (-a\rho,a\rho),\ E_{x'}^Q\subseteq (-a/2, a\rho)\Big\}\,.
\]
We want to prove that the set $G$ covers most of the cube $Q$, and actually only very few perimeter is carried by sections which are not in $G$. More precisely, we will prove that
\begin{gather}
\H^{N-1}(Q \setminus G) \leq 4\rho a^{N-1}\,, \label{GtuttoQ}\\
\H^{N-1}\Big(\partial^* E \cap \big( (Q\setminus G )\times (-a/2,a/2) \big)\Big)\leq 6\rho a^{N-1}\label{perGtuttoQ}\,.
\end{gather}
To obtain these estimates, let us consider $x'\in Q$ such that $x'\notin G$. This can happen for different reasons, for instance $x'$ can belong to the set $\Gamma_1$ of the sections for which $\partial^*(E_{x'})\neq (\partial^* E)_{x'}$, or to the set $\Gamma_2$ of the sections such that $(\partial^* E)_{x'}$ contains at least two points. Let us then call $\Gamma_3=Q\setminus (G\cup \Gamma_1\cup\Gamma_2)$, and consider a point $x'$ in $\Gamma_3$. We have that $\partial^*(E_{x'})=(\partial^* E)_{x'}$ and that it contains either no point, or a single point. In this second case, this single point either does not belong to $(-a\rho,a\rho)$, or it belongs to $(-a\rho,a\rho)$ and the section $E_{x'}$ is the upper part of the segment instead of the lower one, that is, $E_{x'}\subseteq (-a\rho,a/2)$. In any case, we immediately obtain that for any $x'\in \Gamma_3$ one has
\[
\H^1\big(E_{x'}\Delta (-a/2,0)\big) \geq a\rho\,,
\]
and by~(\ref{eq:3bis}) we deduce
\begin{equation}\label{estiG3}
\H^{N-1}(\Gamma_3)\leq \rho\,a^{N-1}\,.
\end{equation}
Instead, Vol'pert Theorem~\ref{thm:Volpert} gives that
\begin{equation}\label{estiG1}
\H^{N-1}(\Gamma_1)=0\,.
\end{equation}
Since the projection on the first $N-1$ coordinates is $1$-Lipschitz and by definition of $\Gamma_2$ we have
\begin{align}
\H^{N-1}&\Big(\partial^* E \cap \big(G\times (-a/2,a/2)\big)\Big) \geq \H^{N-1} (G)\,,\label{PG>G}\\
\H^{N-1}&\Big(\partial^* E \cap \big(\Gamma_2\times (-a/2,a/2)\big)\Big) \geq 2 \H^{N-1} (\Gamma_2)\,.\nonumber
\end{align}
Therefore, on the one hand by~(\ref{estiG1}) and~(\ref{estiG3}) we get
\[\begin{split}
\H^{N-1}(\partial^* E \cap Q^N) &\geq \H^{N-1}(G)+ 2 \H^{N-1}(\Gamma_2)
= a^{N-1} - \H^{N-1}(\Gamma_1)-\H^{N-1}(\Gamma_3)+ \H^{N-1}(\Gamma_2)\\
&\geq (1-\rho) a^{N-1} + \H^{N-1}(\Gamma_2)\,.
\end{split}\]
On the other hand, (\ref{eq:1}) and~(\ref{eq:2}) give
\begin{equation}\label{PerinQ}
\H^{N-1}(\partial^* E \cap Q^N)\leq (1+2\rho)a^{N-1}\,,
\end{equation}
so we deduce
\[
\H^{N-1}(\Gamma_2)\leq 3\rho\,a^{N-1}\,.
\]
Since $Q=G\cup (\Gamma_1\cup\Gamma_2\cup\Gamma_3)$, this estimate together with~(\ref{estiG1}) and~(\ref{estiG3}) implies~(\ref{GtuttoQ}). Finally, (\ref{GtuttoQ}) together with~(\ref{PG>G}) and~(\ref{PerinQ}) directly gives~(\ref{perGtuttoQ}).

\step{II}{An estimate about small cubes.}
In this step we prove a simple estimate about the perimeter on small cubes. More precisely, for every $x'\in\R^{N-1}$ and $\ell>0$ let us call $Q_\ell(x')\subseteq\R^{N-1}$ the cube with center in $x'$, side $\ell$, and with sides parallel to the coordinate planes. Let us then fix some $c\in Q$ and some $\ell>0$ such that $Q_{2\ell}(c)\comp Q$. We aim to show that
\begin{equation}\label{fattobene}\begin{split}
\int_{Q_\ell(c)} \H^{N-2} &\Big(\partial^* E \cap \big(\partial Q_\ell(x')\times (-a/2,a/2)\big)\Big) \, d\H^{N-1}(x')\\
&\leq (N-1)\ell^{N-2} \H^{N-1}\Big(\partial^* E \cap\big( Q_{2\ell}(c)\times (-a/2,a/2)\big)\Big)\,.
\end{split}\end{equation}
To prove this estimate, let us fix a direction $1\leq i \leq N-1$, and for every $x'\in Q_\ell(c)$ let us call
\[
S_i^-(x')=\big\{y\in Q_{2\ell}(c):\, y_i=x_i-\ell\big\}\,.
\]
We have then
\[\begin{split}
\int_{Q_\ell(c)} \H^{N-2}&\Big(\partial^* E \cap \big(S_i^-(x')\times (-a/2,a/2)\big)\Big)\,d\H^{N-1}(x')\\
&=\ell^{N-2} \int_{-3\ell/2}^{-\ell/2} \H^{N-2}\Big(\partial^* E \cap \big(Q_{2\ell}(c)\times (-a/2,a/2)\big)\cap \{x_i = c_i+t\}\Big) \, dt\\
&\leq \ell^{N-2}\H^{N-1}\Big(\partial^* E \cap \big(Q_{2\ell}(c)\times (-a/2,a/2)\big)\cap \{c_i-3/2\ell<x_i<c_i-\ell/2\}\Big)\,.
\end{split}\]
The same estimate is clearly valid replacing $S_i^-$ with $S_i^+$, defined in the same way with $y_i=x_i+\ell$ in place of $y_i=x_i-\ell$. Since for every $x'\in Q_\ell(c)$ one clearly has $\partial Q_\ell(x')\subseteq \cup_{i=1}^{N-1} S_i^-(x')\cup S_i^+(x')$, summing the last estimate among all $1\leq i \leq N-1$, we immediately get~(\ref{fattobene}).

\step{III}{Selection of ``good'' horizontal cubes $Q_j$.}
We now fix $0\leq \gamma<\frac 1{N-1}$ with $\gamma=\gamma(N, \alpha)$, whose precise value is given in~(\ref{tobecited}). Moreover, we fix a very small constant $\bar\eps>0$, which will be precised later, and which depends on $a$, $M$ and $\gamma$, so that in the end $\bar\eps = \bar \eps(M, N, E, \alpha)$. Let also $0<\eps<\bar\eps$ be given. In this step, we define $H=H(\gamma,N)$ pairwise disjoint cubes $Q_j\subseteq Q$, and in the next step we will select one of them to go on with the construction. If $\gamma=0$, we only take one cube, that is $H(0,N)=1$ and the unique cube is $Q_1=Q$ itself. If $\gamma>0$, instead, we take pairwise disjoint $(N-1)$-dimensonal open cubes $Q_{2\ell}(x_j^+)\comp Q$, where we write for brevity $\ell=a\eps^\gamma$. Notice that it is possible to find $2H$ of these pairwise disjoint cubes with
\begin{equation}\label{estiH}
H\geq \frac 1{2^{N+1} \eps^{\gamma(N-1)}}\,.
\end{equation}
For each cube, we can find a point $x_j\in Q_\ell(x_j^+)$ such that, calling $Q_j=Q_\ell(x_j)$,
\[
\H^{N-2} \Big(\partial^* E \cap \big(\partial Q_j\times (-a/2,a/2)\big)\Big)
\leq \intmed_{Q_\ell(x_j^+)} \H^{N-2} \Big(\partial^* E \cap \big(\partial Q_\ell(x')\times (-a/2,a/2)\big)\Big) \, d\H^{N-1}\,,
\]
which by~(\ref{fattobene}) gives
\[
\H^{N-2} \Big(\partial^* E \cap \big(\partial Q_j\times (-a/2,a/2)\big)\Big) \leq \frac {N-1}{a\eps^\gamma}\,\H^{N-1}\Big(\partial^* E \cap\big( Q_{2\ell}(x_j^+)\times (-a/2,a/2)\big)\Big)\,.
\]
Keeping in mind that the cubes $Q_{2\ell}(x_j^+)$ are pairwise disjoint and contained in $Q$, as well as~(\ref{PerinQ}), we deduce that for all $1\leq j\leq H$
\begin{equation}\label{sectN-2small}
\H^{N-2} \Big(\partial^* E \cap \big(\partial Q_j\times (-a/2,a/2)\big)\Big) \leq N 2^{N+1} (a\eps^\gamma)^{N-2}\,,
\end{equation}
provided that $\rho\ll 1/N$. In addition, by Vol'pert Theorem~\ref{thm:Volpert}, we can also assume that for every $1\leq j\leq H$
\begin{align}\label{valvol}
\partial^*E \cap \big(\partial Q_j \times (-a/2, a/2)\big) = \partial^*\Big( E \cap \big(\partial Q_j \times (-a/2, a/2)\big)\Big) && \H^{N-2}\text{-a.e.}\,.
\end{align}
Notice that $E \cap \big(\partial Q_j \times (-a/2, a/2)\big)$ is contained in the union of $2(N-1)$ hyperplanes of dimension $N-1$, so the boundary in the right-hand side above has to be intended as the corresponding union of the $(N-2)$-dimensional boundaries. Observe that also in the case $\gamma=0$ the unique cube $Q_1=Q$ satisfies~(\ref{sectN-2small}), which is in fact weaker than~(\ref{eq:4}), and without loss of generality we can also assume the validity of~(\ref{valvol}) by Vol'pert Theorem.

\step{IV}{Choice of one of the horizontal cubes $Q_j$.}
In this step we select a particular horizontal cube among those defined in the previous step. In particular, we aim to find some $1\leq j\leq H$ such that the cubes $Q_\eps=Q_j$ and $Q^N_\eps=Q_j\times (-a/2,a/2)$ satisfy
\begin{gather}
\frac{\H^{N-1}(\partial^*E \cap Q^N_\eps)}{(a\eps^\gamma)^{N-1}} \leq 1+2^{N+5}\rho\,,\label{eq:10}\\
\frac{\H^{N-1}(\partial^*E \cap Q_\eps^N \setminus \{-a\rho < x_N < a\rho\})}{(a\eps^\gamma)^{N-1}} \leq 2^{N+3} \rho\,,\label{eq:11}\\
\frac{\H^N(E\cap Q_\eps^N \cap \{x_N>0\})}{a^N \eps^{\gamma(N-1)}} \leq 2^{N+3}\rho^2\,,\label{eq:12}\\
\frac{\H^{N-1}(Q_\eps\setminus G)}{(a\eps^\gamma)^{N-1}} \leq 2^{N+5}\rho\,,\label{eq:13}
\end{gather}
being $G$ the set defined in Step~I. Let us start noticing that everything is trivial for the special case $\gamma=0$. Indeed, in this case there is nothing to choose since there is only one cube $Q_1=Q$, hence we only have to check the validity of the properties~(\ref{eq:10})--(\ref{eq:13}). And in turn, the estimates~(\ref{eq:1}), (\ref{eq:2}), (\ref{eq:3bis}) and~(\ref{GtuttoQ}) exactly provide the validity of these four inequalities, with even better constants. By the way, in this particular case the cubes $Q_\eps=Q$ and $Q_\eps^N=Q^N$ actually do not even depend on $\eps$.\par

We focus then on the non-trivial case $\gamma>0$. First of all, keeping in mind~(\ref{eq:2}) and that the cubes $Q_j$ are pairwise disjoint we obtain that strictly less than $1/4$ of the $H$ cubes $Q_j$ are those for which
\[
\H^{N-1}(\partial^*E \cap Q_\eps^N \setminus \{-a\rho < x_N < a\rho\}) > \frac{4\rho a^{N-1}}H\,,
\]
hence by~(\ref{estiH}) for more than $3/4$ of the cubes~(\ref{eq:11}) is valid. With the very same argument one obtains, for more than $3/4$ of the cubes $Q_j$, also the validity of~(\ref{eq:12}) from~(\ref{eq:3bis}) and of~(\ref{eq:13}) from~(\ref{GtuttoQ}).\par

The argument to obtain~(\ref{eq:10}) is slightly more involved. More precisely, calling $A$ the projection over $Q$ of $\partial^* E\cap Q^N$, for every Borel set $V\subseteq Q$ we define
\[
\zeta(V) = \H^{N-1}\big(\partial^* E \cap (V\times (-a/2,a/2)\big) - \H^{N-1}(V\cap A)\,.
\]
It is immediate to notice that $\zeta$ is a positive measure and, observing that $Q\setminus A\subseteq \Gamma_3$, from~(\ref{estiG3}) and~(\ref{PerinQ}) we get $\zeta(Q)\leq 3\rho a^{N-1}$. Hence, arguing as for the other inequalities, we obtain that for more than $3/4$ of the $H$ cubes $Q_j$ one has
\[
\zeta(Q_j) \leq \frac{12\rho a^{N-1}}H\leq 2^{N+5} \rho (a\eps^\gamma)^{N-1}\,,
\]
which implies~(\ref{eq:10}). Summarising, we can find at least one index $1\leq j\leq H$ such that all the estimates~(\ref{eq:10})--(\ref{eq:13}) hold true for such an index. For future use we remark that, putting together~(\ref{eq:10}) and~(\ref{eq:13}), we have
\begin{equation}\label{futref}
\H^{N-1}\Big(\partial^* E \cap \big((Q_\eps\setminus G)\times (-a/2,a/2)\big) \Big)
\leq 2^{N+6}\rho (a\eps^\gamma)^{N-1}\,.
\end{equation}

\step{V}{Definition of $\sigma^\pm$ and of the modified sets $F_\delta\subseteq\R^N$.}
In this step we define a one-parameter family of sets $F_\delta$; in the next step we will select a suitable $\delta$ such that $F=F_\delta$ satisfies the volume constraint in~(\ref{epsepsbeta}), and then we will check that this set $F$ also satisfies the perimeter constraint in~(\ref{epsepsbeta}). To present the construction, we need yet another constant $\bar\delta\ll a$, which depends on $a,\,M,\,\gamma$ and $\eps$, so ultimately $\bar \delta = \bar\delta(M, N, E,\alpha, \eps)$. The precise value of $\bar\delta$ is given in~(\ref{precisebardelta}). We define the integer
\[
K:= \left \lfloor \frac a{3\bar \delta} \right \rfloor\,,
\]
and we take $K$ pairwise disjoint open strips $S_i = Q_\eps \times (\sigma_i,\, \sigma_i + \bar\delta) \subseteq Q_\eps^N$, with constants $a\rho<\sigma_i<\frac a2 - \bar\delta$ for $1\leq i \leq K$. Notice that this is possible in view of the definition of $K$, in particular we can do this in such a way that also the closures $\overline{S_i}$ of the strips are pairwise disjoint. Then, by~\eqref{eq:11} and~\eqref{eq:12}
\[\begin{split}
\sum_{i=1}^K a\H^{N-1}(\partial^*E \cap \overline{S_i}) + \H^N(E\cap S_i) & \leq a\H^{N-1}(\partial^*E \cap (Q_\eps \times (a\rho, a/2)) + \H^N(E\cap (Q_\eps \times (a\rho, a/2))\\
&\leq 2^{N+3}\rho a^N\eps^{\gamma(N-1)} + 2^{N+3}\rho^2 a^N\eps^{\gamma(N-1)} \leq 2^{N+4}\rho a^N \eps^{\gamma(N-1)}\,.
\end{split}\]
Then, there exists at least one strip $S_+:=S_{\bar i}$, such that
\begin{equation}\label{eq:sigma+}
a\H^{N-1}(\partial^*E \cap \overline{S_+}) + \H^N(E\cap S_+) \leq \frac{2^{N+4}\rho a^N \eps^{\gamma(N-1)}}{K} \leq 2^{N+6}\bar \delta \rho (a\eps^\gamma)^{N-1}\,.
\end{equation}
For the sake of notation, we write $\sigma^+ = \sigma_{\bar i}$, so that $S_+ = Q_\eps \times (\sigma^+, \sigma^+ +\bar \delta)$ and we keep in mind that $a\rho<\sigma^+<\frac a2-\bar\delta$.\par

On the other hand, by~\eqref{eq:11} we can estimate
\begin{align*}
2^{N+3}\rho &\geq \frac{\H^{N-1}(\partial^*E \cap Q_\eps^N \cap \{ x_N <-a\rho\})}{(a\eps^\gamma)^{N-1}}
\geq \int_{-\frac a2}^{-a\rho} \frac{\H^{N-2}((\partial^*E \cap Q_\eps^N)^t)}{(a\eps^\gamma)^{N-1}}\,dt\\
&=\left(\frac 12 -\rho \right) \intmed_{-\frac a2}^{-a\rho} \frac{\H^{N-2}\big(\partial^* (E^t) \cap Q_\eps \big)}{a^{N-2}\eps^{\gamma(N-1)}}\,dt \,,
\end{align*}
where the last equality follows from Vol'pert Theorem. In particular, for almost every $-a/2<t<-a\rho$ one has that $\H^{N-1}$-a.e. point of the hyper-plane $\R^{N-1}\times \{t\}$ has density either $0$ or $1$, and
\[
\big(\partial^* E\cap Q_\eps^N\big)^t=\partial^*\big(E^t \cap Q_\eps\big)\,.
\]
Consequently, we can find another section $-\frac a2<\sigma^-<-a\rho$ for which in addition
\begin{equation}\label{eq:sigma-}
\H^{N-2}\big(\partial^* (E^{\sigma^-}) \cap Q_\eps \big)
\leq 3\cdot 2^{N+3}\rho a^{N-2}\eps^{\gamma (N-1)}\,.
\end{equation}
For every $0<\delta\leq \bar\delta$ we define the set $F_\delta$ as
\[
F_\delta:= E \setminus \big( Q_\eps\times (\sigma^-,\sigma^++\delta)\big) \cup \big(\big(E^{\sigma^-}\!\!\cap Q_\eps\big)\times (\sigma^-,\sigma^-+\delta)\big)\cup \big\{(x',x_N+\delta):\, (x',x_N)\in E \cap (Q_\eps\times (\sigma^-,\sigma^+))\big\}\,.
\]
It is easy but important to observe how the set $F_\delta$ is defined. The difference between $E$ and $F_\delta$ is only inside the cylinder $Q_\eps\times (\sigma^-,\sigma^++\delta)$. In this cylinder, the flat basis $E^{\sigma^-}\cap Q_\eps$ is stretched vertically of a height $\delta$, the high central part $E\cap (Q_\eps\times (\sigma^-,\sigma^+))$ is translated vertically of $\delta$, and the short upper part $E\cap (Q_\eps\times (\sigma^+,\sigma^++\delta))\subseteq E\cap S_+$ is simply removed. Keep in mind that $E$ has density either $0$ or $1$ at $\H^{N-1}$-a.e. point of the basis $Q_\eps\times \{\sigma^-\}$, therefore the vertical stretch of $E^{\sigma^-}\cap Q_\eps$ makes sense.

\step{VI}{Volume evaluation and choice of the competitor $F$.}
In this step we estimate the volume of the sets $F_\delta$ defined in the previous step. We will then select a particular one of these sets, which we will simply denote by $F$, such that
\begin{equation}\label{volumeF}
|F| = |E|+\eps\,.
\end{equation}
We shall then show that such a set is the one required by the $\eps-\eps^\beta$ property, by checking the validity of the inequality $P(F)\leq P(E) + C\eps^\beta$.\par

For a given $0<\delta<\bar\delta$, let us define
\begin{align*}
E^+ = F\setminus E\,, && E^-_1 = (E\setminus F) \cap (Q_\eps\times (\sigma^-,\sigma^+))\,, && E^-_2 = (E\setminus F) \cap (Q_\eps\times (\sigma^+,\sigma^++\delta))\,,
\end{align*}
so that
\begin{equation}\label{setting}
|F| = |E| + |E^+| - |E^-_1|- |E^-_2|\,.
\end{equation}
The last term is the easiest to estimate. Indeed, since of course $E^-_2 \subseteq E \cap (Q_\eps\times (\sigma^+,\sigma^++\delta))\subseteq E\cap S_+$, recalling that $\frac 1 M <f,\, h < M$ in $Q^N$, by~(\ref{eq:sigma+}) we directly have
\begin{equation}\label{sett1}
|E^-_2| \leq |E\cap S_+| \leq M \H^N (E\cap S_+)
\leq 2^{N+6}M \bar \delta \rho (a\eps^\gamma)^{N-1}\,.
\end{equation}
Let us now pass to estimate $|E^+|$ and $|E^-_1|$. To do so, it is convenient to consider separately the vertical sections corresponding to some $x'\in G$ and the other ones. We start by picking some $x'\in Q_\eps \cap G$. Then, $(E\cap Q_\eps)_{x'}$ is a vertical segment of the form $(-a/2,y)$ with some $-a\rho < y <a \rho$, so in particular $\sigma^- < y <\sigma^+$. Therefore, the section $(E^-_1)_{x'}$ is empty, while the section $(E^+)_{x'}$ is the segment $(y,y+\delta)$. By Fubini and~(\ref{eq:13}), we have then
\begin{equation}\label{sett2}
\begin{array}{c}
\big|E^-_1 \cap (G\times (-a/2,a/2))\big|=0\,,\\
\bal\frac{\delta(a\eps^\gamma)^{N-1}}M\, (1-2^{N+5}\rho) \leq\big|E^+ \cap (G\times (-a/2,a/2))\big|\leq M\delta (a\eps^\gamma)^{N-1}\eal \,.
\end{array}
\end{equation}
Let us now instead take $x'\in Q_\eps\setminus G$, and assume that $(x',x_N)\in E^+$ for some $x_N\in (\sigma_-,\sigma^++\delta)$. The fact that $(x',x_N)\in E^+=F\setminus E$ implies that $(x',x_N)\notin E$. Instead, the fact that $(x',x_N)\in F$ implies that $(x',y)\in E$ where $y=\max \{x_N-\delta, \sigma^-\}$. As a consequence, the segment $\{x\}\times (x_N-\delta,x_N)$ intersects $\partial^* (E_{x'})$. Analogously, if $(x',x_N)\in E^-_1$, this means that $(x',x_N)\in E$ while $(x',\max\{x_N-\delta,\sigma^-\})\notin E$, so that $\{x\}\times (x_N-\delta,x_N)$ intersects again $\partial^* (E_{x'})$. Keeping in mind Vol'pert Theorem, we deduce that
\[
(E^+ \cup E^-_1) \cap \Big((Q_\eps\setminus G)\times (-a/2,a/2) \Big)\subseteq \tau\Big(\partial^* E\cap \big(Q_\eps\setminus G\times (-a/2,a/2)\big)\Big)\,,
\]
where for every set $A\subseteq\R^N$ we define
\begin{equation}\label{hereistau}
\tau(A) = \big\{(x',x_N+t\delta)\,, (x',x_N)\in A,\, 0\leq t\leq 1\big\}\,.
\end{equation}
Therefore, also by~(\ref{futref}) we deduce
\begin{equation}\label{sett3}\begin{split}
\big| (E^+ \cup E^-_1) &\cap \big((Q_\eps\setminus G)\times (-a/2,a/2) \big)\big|
\leq M \H^N\Big((E^+ \cup E^-_1) \cap \big((Q_\eps\setminus G)\times (-a/2,a/2)\big)\Big)\\
&\leq M \delta \H^{N-1}\Big(\partial^* E\cap \big(Q_\eps\setminus G\times (-a/2,a/2)\big)\Big)
\leq 2^{N+6} M \delta \rho (a\eps^\gamma)^{N-1}\,.
\end{split}\end{equation}
We are finally in a position to define $\bar\delta$ as
\begin{equation}\label{precisebardelta}
\bar \delta = \frac{2M\eps}{(a\eps^\gamma)^{N-1}}\,.
\end{equation}
Keep in mind that our construction makes sense only if $\bar\delta\ll a$, which in turn is true as soon as we have chosen $\bar\eps$ small enough (recall that $\bar\eps$ could be chosen depending on $a,\, M$ and $\gamma$, and that $0\leq \gamma<\frac 1{N-1}$). Putting together~(\ref{setting}), (\ref{sett1}), (\ref{sett2}) and~(\ref{sett3}), and recalling~(\ref{precisebardelta}), in the case $\delta=\bar\delta$ we can now estimate
\begin{equation}\label{toobig}
|F_{\bar\delta}| - |E| \geq
\bar\delta (a\eps^\gamma)^{N-1}\Big(\frac {1- 2^{N+5}\rho}M - 2^{N+7}M \rho\Big)
=2M\eps \Big(\frac {1- 2^{N+5}\rho}M - 2^{N+7}M \rho\Big) > \eps\,,
\end{equation}
where the last inequality is true up to taking $\rho$ small enough depending on $M$ and $N$ (keep in mind that $\rho$ was chosen precisely depending on $M$ and on $N$). On the other hand, in the case $\delta=\bar\delta/(4M^2)$, (\ref{setting}), (\ref{sett2}) and~(\ref{sett3}), together with~(\ref{precisebardelta}), allow to deduce
\begin{equation}\label{toosmall}
|F_{\bar\delta/(4M^2)}| - |E| \leq \frac{\bar\delta}{4M^2} \,(a\eps^\gamma)^{N-1}\big(M +2^{N+6} M \rho\big)
=\frac\eps{2M} \big(M +2^{N+6} M \rho\big)<\eps\,,
\end{equation}
again up to choosing $\rho$ small enough. Since of course the measure of $F_\delta$ is continuous on $\delta$, by~(\ref{toobig}) and~(\ref{toosmall}) we deduce the existence of some $\delta$ between $\bar\delta/(4M^2)$ and $\bar\delta$ such that~(\ref{volumeF}) holds. We fix then this $\delta$, and from now on we let $F=F_\delta$.

\step{VII}{Evaluation of $P(F)$.}
In this step we evaluate $P(F)$. Notice that, in order to obtain the $\eps-\eps^\beta$ property, we need to show that $P(F)\leq P(E)+C\eps^\beta$. Since $F$ equals $E$ outside of the open cylinder $\C=Q_\eps\times (\sigma^-,\sigma^++\delta)$, the difference between $\partial^* F$ and $\partial^* E$ is contained in the closed cylinder. We split $\partial^* F \cap \overline\C$ in four parts, namely $T^+_{\rm top}$, $T^+_{\rm btm}$, $T^+_{\rm ltr}$ and $T^+_{\rm int}$, where
\begin{equation}\label{ult1}\begin{aligned}
T^+_{\rm top}&=\partial^* F \cap \big(Q_\eps \times \{\sigma^++\delta\}\big)\,, &
T^+_{\rm btm}&=\partial^* F \cap \big(Q_\eps \times \{\sigma^-\}\big)\,, \\
T^+_{\rm ltr}&=\partial^* F \cap \big(\partial Q_\eps \times (\sigma^-,\sigma^++\delta)\big)\,,\qquad &
T^+_{\rm int}&=\partial^* F \cap \C\,.
\end{aligned}\end{equation}
We also split $\partial^* E\cap \overline \C$ in four parts in a slightly different way as
\begin{equation}\label{ult2}\begin{aligned}
T^-_{\rm top}&=\partial^* E \cap \big(Q_\eps \times [\sigma^+,\sigma^++\delta]\big)\,, &
T^-_{\rm btm}&=\partial^* E \cap \big(Q_\eps \times \{\sigma^-\}\big)\,, \\
T^-_{\rm ltr}&=\partial^* E\cap \big(\partial Q_\eps \times (\sigma^-,\sigma^++\delta)\big)\,,\qquad &
T^-_{\rm int}&=\partial^* E \cap \big(Q_\eps \times (\sigma^-,\sigma^+)\big)\,.
\end{aligned}\end{equation}
We now have to carefully consider the above pieces. The easiest thing to notice is that, since by definition of $\sigma^-$ the set $E$ has density either $0$ or $1$ at $\H^{N-1}$-almost every point of $Q_\eps\times \{\sigma^-\}$, then
\begin{equation}\label{piece1}
\H^{N-1}\big(T^+_{\rm btm} \cup T^-_{\rm btm}\big) =0\,,
\end{equation}
so neither $E$ nor $F$ carry any perimeter on the bottom face of the cylinder $\C$.\par
Let us now consider $T^\pm_{\rm top}$. We aim to show that, up to $\H^{N-1}$-negligible subsets,
\begin{equation}\label{topinclusion}
T^+_{\rm top} \subseteq \pi\big(T^-_{\rm top}\big)\,,
\end{equation}
where $\pi(x',x_N)=(x',\sigma^++\delta)$ denotes the projection over the hyperplane $\{x_N=\sigma^++\delta\}$. By the properties of sets of finite perimeter, for $\H^{N-1}$-almost every $x'\in Q_\eps$ we have that $E$ has density either $0$, or $1/2$, or $1$ both at $(x',\sigma^+)$ and at $(x',\sigma^++\delta)$. If at least one of them is $1/2$, then $x'$ belongs to the right set in~(\ref{topinclusion}). If they are both $0$ or both $1$, then by construction $F$ has also density $0$ or $1$ at the point $(x',\sigma^++\delta)$, which then does not belong to $T^+_{\rm top}$. Let us then assume that one of the two densities is $0$ and the other one is $1$. Up to discarding an $\H^{N-1}$-negligible quantity of points $x'$, we can assume that Vol'pert Theorem holds for the section $E_{x'}$, and the density of $E_{x'}$ at the points $\sigma^+$ and $\sigma^++\delta$ is once $0$ and once $1$. Consequently, $\partial^*(E_{x'})$ contains some point in the segment $(\sigma^+,\sigma^++\delta)$, and then again $x'$ belongs to the right set in~(\ref{topinclusion}). Summarising, we proved the inclusion~(\ref{topinclusion}) up to sets of zero $\H^{N-1}$-measure. Keeping in mind the fact that the projection is $1$-Lipschitz and~(\ref{eq:sigma+}), and noticing that $T^-_{\rm top}\subseteq \partial^* E\cap \overline{S_+}$, we have then
\begin{equation}\label{piece2}
\int_{T^+_{\rm top}} h(y,\nu_F(y))\,d\H^{N-1}(y) \leq M \H^{N-1}(T^+_{\rm top})
\leq M \H^{N-1}(T^-_{\rm top})
\leq 2^{N+6} M \bar \delta \rho a^{N-2} (\eps^\gamma)^{N-1}\,.
\end{equation}
To consider the set $T^+_{\rm ltr}$, we can argue similarly, keeping in mind that by~(\ref{valvol}) Vol'pert Theorem is true for $\H^{N-2}$-almost each $x'\in \partial Q_\eps$. Indeed, take $(x',x_N)$ where $x'$ is a point in $\partial Q_\eps$ at which Vol'pert Theorem holds true, and $\sigma^-+\delta<x_N<\sigma^++\delta$. Up to $\H^{N-1}$-negligible subsets, $E$ has density either $0$, or $1/2$ or $1$ both at $(x',x_N)$ and at $(x',x_N-\delta)$. If the densities are both $0$ or both $1$, then $(x',x_N)\notin \partial^* F$, while if one density is $0$ and the other one is $1$ then again there is some point in the segment $\{x'\}\times (x_N-\delta,x_N)$ which belongs to $\partial^* E$. Summarising, up to $\H^{N-1}$-negligible subsets
\[
T^+_{\rm ltr} \subseteq \partial Q_\eps\times (\sigma^-,\sigma^-+\delta) \cup \tau\big(\partial^* E\cap (\partial Q_\eps\times (\sigma^-+\delta,\sigma^++\delta))\big)\,,
\]
where $\tau$ is defined in~(\ref{hereistau}). As a consequence, also recalling~(\ref{sectN-2small}) we have
\begin{equation}\label{piece3}\begin{split}
\int_{T^+_{\rm ltr}} h(y,\nu_F(y))&\,d\H^{N-1}(y)\\
&\leq M \Big(\H^{N-1}\big(\partial Q_\eps \times (\sigma^-,\sigma^-+\delta)\big)+ \delta \H^{N-2}\big(\partial^* E \cap (\partial Q_\eps\times(-a/2,a/2))\big)\Big)\\
&\leq M\Big( 2\delta (N-1)(a\eps^\gamma)^{N-2}+N \delta 2^{N+1} (a\eps^\gamma)^{N-2}\Big)
\leq 2^{N+2} NM\delta(a\eps^\gamma)^{N-2}\,.
\end{split}\end{equation}
To conclude, we have to consider $T^\pm_{\rm int}$. Since the section $\sigma^-$ has been chosen in such a way that Vol'pert Theorem is true, and in the cylinder $\C^-=Q_\eps\times (\sigma^-,\sigma^-+\delta)$ the set $F$ is simply $(E^{\sigma^-}\cap Q_\eps)\times (\sigma^-,\sigma^-+\delta)$, we readily obtain
\[
\partial^* F\cap \C^- = \big(\partial^*\big(E^{\sigma^-}\big) \cap Q_\eps \big)\times (\sigma^-,\sigma^-+\delta)\,.
\]
Instead, by construction, $\partial^* F\cap (Q_\eps\times (\sigma^-+\delta,\sigma^++\delta))$ is nothing else than a vertical translation of height $\delta$ of the set $\partial^* E\cap (Q_\eps\times (\sigma^-,\sigma^+))=T^-_{\rm int}$. In particular, if $(x',x_N)\in T^-_{\rm int}$ then $\nu_F(x',x_N+\delta)=\nu_E(x',x_N)$. Let us call for brevity $\xi$ the vertical translation of a height $\delta$, i.e. $\xi(x',x_N)=(x',x_N+\delta)$, so that for $y\in T^-_{\rm int}$ one has $\nu_F(\xi(y))=\nu_E(y)$. By the local $\alpha$-H\"older assumption on $h$ in the spatial variable, we have a constant $C_1$ such that $|h(\xi(y),\nu)-h(y,\nu)|\leq C_1 \delta^\alpha$ for every $y\in Q^N,\, \nu\in\S^{N-1}$. Consequently, by~(\ref{eq:sigma-}) and~(\ref{eq:10}) we can estimate
\begin{equation}\label{piece4}\begin{split}
\int_{T^+_{\rm int}} h(y,\nu_F(y))\, d\H^{N-1}(y)
&-\int_{T^-_{\rm int}} h(y,\nu_E(y))\, d\H^{N-1}(y)\\
&\leq M\H^{N-1}\big(\partial^* F\cap \C^-\big) + \int_{T^-_{\rm int}} h(\xi(y),\nu_E(y))-h(y,\nu_E(y))\,d\H^{N-1}(y)\\
&\leq M\delta \H^{N-2}\Big(\partial^*\big(E^{\sigma^-}\big)\cap Q_\eps\Big) + C_1\delta^\alpha \H^{N-1} (T^-_{\rm int})\\
&\leq 3\cdot 2^{N+3} M\delta \rho a^{N-2}\eps^{\gamma (N-1)} + C_1\delta^\alpha (a\eps^\gamma)^{N-1} (1+2^{N+5}\rho)\,.
\end{split}\end{equation}
Putting together~(\ref{piece1}), (\ref{piece2}), (\ref{piece3}) and~(\ref{piece4}), and recalling that $\delta\leq \bar\delta$ together with the definition~(\ref{precisebardelta}) of $\bar\delta$, if $\bar\eps$ is small enough we finally derive
\begin{equation}\label{pregamma}
P(F)-P(E) \leq C_2 \Big(\bar\delta(\eps^\gamma)^{N-2} + \bar\delta^\alpha (\eps^\gamma)^{N-1}\Big)
\leq C_3 \Big(\eps^{1-\gamma} + \eps^{\alpha + \gamma(N-1)(1-\alpha)}\Big)
\end{equation}
for two constants $C_2,\,C_3$ only depending on $N,\,M$ and $a$, so actually on $N,\, M$ and $E$.

\step{VIII}{Optimal choice of $\gamma$ and definition of $\beta$.}
The estimate~(\ref{pregamma}) proved in the above step holds for a generic $\gamma$. Keeping in mind that $\gamma$ could be chosen depending only on $N$ and $\alpha$, and that the construction requires $0\leq \gamma<1/(N-1)$, we here make an optimal choice. Notice that $\gamma\mapsto 1-\gamma$ is strictly decreasing while $\gamma\mapsto \alpha+\gamma(N-1)(1-\alpha)$ is increasing. As $1-\gamma\geq \alpha+\gamma(N-1)(1-\alpha)$ for $\gamma=0$ while $1-\gamma< \alpha+\gamma(N-1)(1-\alpha)$ for $\gamma=1/(N-1)$, the best choice is the unique value $\gamma$ such that $1-\gamma=\alpha+\gamma(N-1)(1-\alpha)$, i.e.
\begin{equation}\label{tobecited}
\gamma = \frac{1-\alpha}{\alpha+N(1-\alpha)}\,.
\end{equation}
With this choice, the estimate~(\ref{pregamma}) becomes $P(F)\leq P(E) +C\eps^\beta$ where $\beta=\beta(N, \alpha)$ is given by~(\ref{defbeta}) and $C$ only depends on $M,\, N,\, E,\, \alpha$ and the local H\"older constant of $h$.

\step{IX}{The case of a continuous function $h$.}
In this step we consider in more details the case $\alpha=0$, in which ``locally $\alpha$-H\"older'' is a synonym of ``locally bounded''. In this case, $\gamma=1/N$ and $\beta=(N-1)/N$, so in the previous steps for a small $0<\eps<\bar\eps$ we already found a set $F$ such that $|F|=|E|+\eps$ and $P(F)\leq P(E) + C\eps^{\frac{N-1}N}$. In view of applications, for instance Theorem~\ref{thm:boundedness}, it is important to be able to choose the constant $C$ arbitrarily small. More precisely, for any given constant $\kappa>0$, one would be interested to have $C\leq\kappa$ up to choosing $\bar\eps$ accordingly. In particular, $\bar\eps$ should also depend on $\kappa$ and clearly if $\kappa$ becomes very small, so must $\bar\eps$. It is simple to see that this improvement is false for a generic locally bounded $h$, see for instance~\cite{CP}. Here we show that such an improvement is possible if $h$ is continuous in the spatial variable.\par

Let us assume that $\alpha=0$ and that $h$ is continuous in the spatial variable. We only need a slight yet fundamental modification of our argument. More precisely, we fix a large constant $L=L(M,\,N,\, E,\, \kappa)$, to be specified later on, and we possibly decrease, depending on $\kappa$ and on $L$, the value of $\bar\eps$ found in the previous steps: we let $\overline{\eps'}=\bar\eps/L$. For every $0<\eps<\bar\eps$, we call $\eps'=\eps/L$, so that $\eps'$ can be any number in the interval $\big(0,\overline{\eps'}\big)$. We aim to find some set $F$, which equals $E$ outside $Q^N$, such that
\begin{align}\label{wantnow}
|F|=|E|+ \eps'\,, && P(F) \leq P(E) + C \eps'^{\frac{N-1}N}\,.
\end{align}
Correspondingly, we also modify  the definition of $\bar\delta$, which in place of~(\ref{precisebardelta}) is now
\begin{equation}\label{precisebardeltanow}
\bar \delta = \frac{2M\eps}{L(a\eps^\gamma)^{N-1}}= \frac{2M\eps^{1/N}}{La^{N-1}}\,.
\end{equation}
As already done immediately after~(\ref{precisebardelta}), we recall that the construction only makes sense with $\bar\delta\ll a$, and this is again true as soon as $\bar\eps$ is small enough. Up to these modifications, we keep everything unchanged in the first $5$ steps. In Step~VI, the volume estimates are again true, and in particular~(\ref{toobig}) and~(\ref{toosmall}) now read
\begin{gather*}
|F_{\bar\delta}| - |E|\geq
\bar\delta\, (a\eps^\gamma)^{N-1}\Big(\frac {1- 2^{N+5}\rho}M - 2^{N+7}M \rho\Big)
> \frac\eps L = \eps' \,, \\
|F_{\bar\delta/(4M^2)}| - |E| \leq \frac{\bar\delta}{4M^2} \,(a\eps^\gamma)^{N-1}\big(M +2^{N+6} M \rho\big)
=\frac\eps{2LM} \big(M +2^{N+6} M \rho\big)<\frac\eps L =\eps'\,,
\end{gather*}
as soon as $\rho$ is small enough and depending only on $N$ and $M$. Again by continuity, we have then the existence of a constant $\bar\delta/(4M^2)<\delta<\bar\delta$ such that, calling $F=F_\delta$, the equality $|F|-|E|=\eps'$ holds. We have then to bound the perimeter of $F$, and this will be done by using the estimates of Step~VII with only a single modification.\par

Exactly in Step~VII, we divide $\partial^* F \cap \overline\C$ and $\partial^* E \cap \overline\C$ in the parts $T^\pm_{\rm top}$, $T^\pm_{\rm btm}$, $T^\pm_{\rm ltr}$ and $T^\pm_{\rm int}$ by~(\ref{ult1}) and~(\ref{ult2}). Recall that $\H^{N-1}\big(T^+_{\rm btm} \cup T^-_{\rm btm}\big) =0$ by~(\ref{piece1}), while by~(\ref{piece2}) and~(\ref{piece3}), also recalling~(\ref{precisebardeltanow}) and that $\gamma=1/N$, we have
\begin{equation}\label{piece23new}
\int_{T^+_{\rm top}\cup T^+_{\rm ltr}} h(y,\nu_F(y))\,d\H^{N-1}(y) \leq \frac{2^{N+7}NM^2}{aL} \big( \rho  \eps
+ \eps^{\frac{N-1}N}\big) = C_4 \Big(\rho \eps' + \frac 1{L^{1/N}}\,\eps'^{\frac{N-1}N}\Big)< \frac \kappa 3 \,\eps'^{\frac{N-1}N}  \,,
\end{equation}
where $C_4$ is a constant only depending on $M\,, N$ and $a$, so again on $M\,, N$ and $E$, and the last step is true as soon as $L$ is large enough and $\bar\eps$ is small enough, both depending on $M,\,N,\, E$ and $\kappa$.\par

To conclude, we need to evaluate the perimeter contribution of $T^\pm_{\rm int}$. We will argue similarly as what already done in the estimate~(\ref{piece4}). The only difference is that this time we do not estimate
\[
h(\xi(y),\nu_E(y))-h(y,\nu_E(y))\leq C_1\delta^\alpha=C_1
\]
by using the local boundedness, rather we use the continuity of $h$ in the spatial variable. This implies uniform continuity when the spatial variable is inside the cube $Q^N$. More precisely, we call $\omega$ the continuity modulus of $h$ in the spatial variable inside $Q^N$, that is
\[
\omega(d) = \sup \Big\{|h(y,\nu)-h(z,\nu)|:\, y,\,z\in Q^N,\, |y-z|\leq d,\, \nu\in\S^{N-1}\Big\}\,.
\]
Then, calling $\xi$, as in Step~VII, the vertical translation of height $\delta$, and recalling that $\delta<\bar\delta$ and~(\ref{precisebardeltanow}), we have
\[
\big| h(\xi(y),\nu_E(y)) - h(y,\nu_E(y))\big| \leq \omega(\bar\delta) = \omega\bigg( \frac{2M\eps^{1/N}}{La^{N-1}}\bigg)\,.
\]
Putting this estimate inside~(\ref{piece4}), we now get
\begin{equation}\label{piece4new}\begin{split}
\int_{T^+_{\rm int}} h(y,\nu_F(y))\, d\H^{N-1}(y)&-\int_{T^-_{\rm int}} h(y,\nu_E(y))\, d\H^{N-1}(y)\\
&\leq 3\cdot 2^{N+3} M\bar\delta \rho a^{N-2}\eps^{\gamma (N-1)} + \omega(\bar\delta) (a\eps^\gamma)^{N-1} (1+2^{N+5}\rho)\\
&\leq C_5 \eps'  + 2\omega(\bar\delta) a^{N-1} L^{\frac{N-1}N} \,\eps'^{\frac{N-1}N}
< \frac \kappa 3 \eps'^{\frac{N-1}N}  + 2\omega(\bar\delta) a^{N-1} L^{\frac{N-1}N} \,\eps'^{\frac{N-1}N}\,,
\end{split}\end{equation}
where as usual $C_5=C_5(M,\,N,\, a)$ and the last estimate is true for $\bar\eps$ small enough, as usual depending on $M,\,N,\, E$ and $\kappa$.\par

We can finally conclude. Indeed, the first thing to do is to fix $L$, depending on $M,\,N,\, E$ and $\kappa$, so large that~(\ref{piece23new}) holds true. Once $L$ has been fixed, we fix $\bar\eps$ as small as desired, depending on $M,\,N,\, E,\, \kappa$ and $L$. Since $h$ is continuous in the spatial variable, the continuity modulus $\omega(d)$ goes to $0$ as $d$ goes to $0$. Hence, if $\bar\delta$ is small enough, we have that
\[
2\omega(\bar\delta) a^{N-1} L^{\frac{N-1}N} <\frac \kappa 3\,.
\]
Recalling the definition~(\ref{precisebardeltanow}) of $\bar\delta$, we deduce that the above inequality is true as soon as $\bar\eps$ is small enough, depending on $M,\,N,\,L$ and $a$. Inserting this last inequality in~(\ref{piece4new}) and recalling~(\ref{piece23new}), we obtain $P(F)\leq P(E) + \kappa \eps'^{\frac{N-1}N}$. Summarising, in the case when $\alpha=0$ but $h$ is continuous in the spatial variable, for every $\kappa>0$ we have found some $\overline{\eps'}>0$ depending on $M,\,N,\,E,\, \kappa$ such that for every $0<\eps'<\overline{\eps'}$ there is a set $F$ with $E\Delta F\comp Q^N$ satisfying~(\ref{wantnow}). The $\eps-\eps^\beta$ property with constant $\kappa$ has then been proved, at least for positive values of $\eps'$.

\step{X}{Conclusion (i.e., the case $\eps<0$).}
In this last step, we finally conclude the proof. We are left to consider the case $\eps<0$, which is in fact a simple consequence of what we have proved so far. Summarising, we have taken a set $E\subseteq\R^N$ of locally finite perimeter, a ball $B$ with $\H^{N-1}(\partial^* E\cap B)>0$, and a point $x\in \partial^* E \cap B$, and we found two positive constants $C$ and $\bar\eps$ with the property that, for every $0<\eps<\bar\eps$, there exists a set $F$ with $F\Delta E\comp B$ and satisfying~(\ref{epsepsbeta}), that is
\begin{align*}
F\Delta E \comp B\,, && |F| - |E| = \eps\,, && P(F)-P(E) \leq C\eps^\beta\,.
\end{align*}
We can apply this result to the set $\widehat E=B\setminus E$, which is also a set of locally finite perimeter and whose reduced boundary also have intersection with $B$ of strictly positive $\H^{N-1}$-measure, since $\partial^* \widehat E\cap B=\partial^* E\cap B$. Then, we have two positive constants $\widehat C$ and $\widehat{\bar\eps}$ such that for every $0<\eps<\widehat{\bar\eps}$, there exists a set $\widehat F$ such that
\begin{align*}
\widehat F\Delta \widehat E \comp B\,, && |\widehat F| - |\widehat E| = \eps\,, && P(\widehat F)-P(\widehat E) \leq \widehat C\eps^\beta\,.
\end{align*}
Defining then $F=\big(E\setminus B\big)\cup \big(B\setminus \widehat F\big)$, we clearly have
\begin{align*}
F\Delta E \comp B\,, && |F| - |E| = -\eps\,, && P(F)-P(E) \leq C'|-\eps|^\beta\,.
\end{align*}
In other words, we automatically have the validity of~(\ref{epsepsbeta}) also for negative values of $\eps$, up to possibly decreasing the value of $\bar\eps$ (resp., increasing the value of $C$), in case $\widehat{\bar\eps}$ is smaller (resp., $\widehat C$ is larger). The proof is then concluded.
\end{proof}

\section{Boundedness and regularity of isoperimetric sets}\label{sec:applications}

In this last section we give the proofs of Theorem~\ref{thm:boundedness} and of Theorem~\ref{thm:regularity}, which respectively deal with the boundedness and the regularity of isoperimetric sets.

\subsection{Boundedness}

Let us start with the proof of the boundedness of isoperimetric sets. We underline that the assumptions of the theorem, namely, the boundedness of the densities and the continuity of $h$, are both necessary. Indeed, even in the special case of single density, it is possible to find unbounded isoperimetric sets when either the boundedness or the continuity assumption is dropped (see~\cite{MP,CP}).

\begin{proof}[Proof of Theorem~\ref{thm:boundedness}]
Let $E$ be an isoperimetric set as in the claim. In particular, we can find a ball $B$ and some $\bar\eps>0$ such that for every $-\bar\eps<\eps<\bar\eps$ there exists a set $F$ satisfying~(\ref{epsepsbeta}) with constant
\begin{equation}\label{eq:choiceC}
C = \frac{N\omega_N^{\frac 1N}}{2M^\frac{2N-1}N}\,.
\end{equation}
Let us fix $R_0\gg 1$ such that, calling $B_{R_0}=\{x\in\R^N:\, |x|<R_0\}$, one has $B\subseteq B_{R_0}$ and $|E\setminus B_{R_0}|<\bar\eps$. For every $R>R_0$, let us define
\[
\varphi(R) = |E\setminus B_R| \,,
\]
which is a bounded and locally Lipschitz decreasing function, thus is particular in $W^{1,1}(\R)$. For every $R>R_0$, we can take a set $F$ satisfying~(\ref{epsepsbeta}) with $\eps=\varphi(R)$, so that $|F\cap B_R|=|E|$. Since $E$ is an isoperimetric set, keeping in mind~(\ref{eq:bounds}) we can evaluate
\[\begin{split}
P(E)&\leq P(F\cap B_R) = P(F)+P(E\cap B_R) -P(E)\\
&\leq P(F) - P(E\setminus B_R) + \int_{\partial B_R\cap E} h(x,x/|x|)+h(x,-x/|x|) \,d\H^{N-1}(x)\\
&\leq P(E) + C\varphi(R)^{\frac{N-1}N} -P(E\setminus B_R) + 2M \H^{N-1}(\partial B_R\cap E)\\
&\leq P(E) + C\varphi(R)^{\frac{N-1}N} -P(E\setminus B_R) - 2M^2 \varphi'(R)\,,
\end{split}\]
recalling that $\varphi'(R) = - \int_{\partial B_R\cap E} f(x)\,d\,\H^{N-1}(x)$ holds for $\H^{N-1}$-almost every $R$.

By~(\ref{eq:bounds}) and by the Euclidean isoperimetric inequality we have
\[
P(E\setminus B_R) \geq \frac 1M \, P_{\rm Eucl}(E\setminus B_R) \geq \frac {N\omega_N^{1/N}}M \, |E\setminus B_R|_{\rm Eucl}^{\frac{N-1}N}
\geq \frac {N\omega_N^{1/N}}{M^{\frac{2N-1}N}} \, \varphi(R)^{\frac{N-1}N}=2C\varphi(R)^{\frac{N-1}N}\,.
\]
Pairing it with the previous estimate we get
\[
|\varphi'(R)| \geq \frac C{2M^2} \,\varphi(R)^{\frac{N-1}N}\,.
\]
Since $\frac{N-1}N<1$, this inequality implies the existence of some $R_1>R_0$ such that $\varphi(R_1)=0$, that is, the set $E$ is bounded.
\end{proof}

\subsection{Regularity}

To present the proof of the regularity of isoperimetric sets, we first need to recall some classical definitions and results.

\begin{defin}\label{defrpp}
We say that a set $E\subseteq \R^N$ of locally finite perimeter is \emph{quasi-minimal} if there exists a constant $K>0$ such that, for every ball $B_r(x)$, one has
\begin{equation}\label{quasiminimality}
P_{\rm Eucl}(E, B_r(x)) \leq Kr^{N-1}\,.
\end{equation}
We say it is \emph{$\omega$-minimal}, for some continuous and increasing function $\omega:\R^+ \to \R^+$ with $\omega(0)=0$, if for every ball $B_r(x)$ and every set $H\subseteq\R^N$ with $H\Delta E \comp B_r(x)$ one has
\[
P_{\rm Eucl}(E, B_r(x)) \leq P_{\rm Eucl}(H, B_r(x)) +\omega(r)r^{N-1}\,.
\]
We say that a set $E\subseteq\R^N$ is \emph{porous} if there exists a constant $\delta>0$ such that, for every $x\in\partial E$ and every $r>0$ small enough (possibly depending on $x$), there are two balls $B_1,\,B_2\subseteq B_r(x)$, both with radius $\delta r$, such that $B_1\subseteq E$ and $B_2\subseteq \R^N\setminus E$.
\end{defin}

Putting together well-known results, see~\cite{DS, GG, KKLS, Tam}, we have the following.

\begin{thm}\label{thm:stdreg}
Let $E$ be a set of locally finite perimeter. If it is locally quasi-minimal, then it is porous and $\partial E = \partial^* E$ up to $\H^{N-1}$-negligible sets. Additionally, if it is locally $\omega$-minimal with $\omega(r) = Cr^{2\eta}$ for some $\eta \in (0,1/2]$, then $\partial E$ is ${\rm C}^{1, \eta}$.
\end{thm}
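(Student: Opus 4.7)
The theorem bundles two classical regularity statements, so I would split the proof into a ``volumetric'' part (porosity and $\partial E = \partial^* E$ up to $\H^{N-1}$-null sets, under quasi-minimality) and an ``analytic'' part (${\rm C}^{1,\eta}$-regularity under $\omega$-minimality with $\omega(r) = C r^{2\eta}$), each reducing to standard machinery.

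For the first part, the cornerstone is to establish two-sided density estimates: there exists $c_0 = c_0(N, K) > 0$ such that
\[
c_0\, r^N \leq |E \cap B_r(x)|_{\rm Eucl} \leq (1-c_0)\, r^N
\]
for every $x \in \partial E$ and every sufficiently small $r$. These follow from the relative isoperimetric inequality applied to $E \cap B_r(x)$ and to $B_r(x) \setminus E$, combined with the interior perimeter bound $P_{\rm Eucl}(E, B_r(x)) \leq K r^{N-1}$ from quasi-minimality and the trivial $\H^{N-1}(\partial B_r) = O(r^{N-1})$ on the spherical contribution. Porosity is then extracted by a packing argument: if no sub-ball of $B_r(x)$ of radius $\delta r$ were contained in $E$, then $E \cap B_{r/2}(x)$ would lie in the $\delta r$-tubular neighborhood of $\partial E$, whose volume is controlled by the quasi-minimality perimeter bound via $C \delta K r^N$; choosing $\delta$ small in terms of $c_0$ and $K$ contradicts the lower density bound, and the symmetric argument produces a sub-ball inside $\R^N \setminus E$. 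Finally, the density estimates exclude every topological boundary point from $E^0 \cup E^1$, so Federer's identification $\partial^* E = E^{1/2}$ up to $\H^{N-1}$-null sets yields $\partial E = \partial^* E$ up to $\H^{N-1}$-null sets.

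For the ${\rm C}^{1,\eta}$ part I would follow Tamanini's excess-decay approach. Define the spherical excess
\[
e(x, r) = r^{1-N} \int_{\partial^* E \cap B_r(x)} \frac{|\nu_E(y) - \bar\nu_{x, r}|^2}{2}\, d\H^{N-1}(y),
\]
with $\bar\nu_{x, r}$ the mean normal in $B_r(x)$. The key lemma to prove is a decay iteration: there exist $\theta \in (0, 1/2)$ and $\eps_0, C_1 > 0$ such that $e(x, r) < \eps_0$ implies
\[
e(x, \theta r) \leq \theta^{2\eta}\, e(x, r) + C_1\, \omega(r).
\]
This is established by blow-up and contradiction: a violating sequence rescales to an Euclidean perimeter minimizer, which by De~Giorgi's $\eps$-regularity theorem is a half-space on $B_\theta$, giving excess at scale $\theta$ decaying faster than any prescribed rate -- contradicting the failure of the decay for $\theta$ chosen small enough. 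Since $\omega(r) = C r^{2\eta}$, iteration yields $e(x, r) \leq C_2\, r^{2\eta}$ in a neighborhood of any $x_0 \in \partial^* E$, and Campanato's characterization of H\"older functions, applied to $\nu_E$ on $\partial^* E$, produces the $\eta$-H\"older continuity of the normal, which is exactly the ${\rm C}^{1,\eta}$-regularity claim.

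The main obstacle is the excess-decay lemma: the compactness argument must be arranged so that the $\omega(r)$ perturbation term passes cleanly through the blow-up limit (and this is precisely what forces the matching exponent $2\eta$ in $\omega$), and one must invoke the full strength of De~Giorgi's $\eps$-regularity for minimizers of the Euclidean perimeter as a black box in the flat-limit step. Once this decay is in hand, the remaining ingredients -- the isoperimetric density estimates, the tube-based porosity argument, and the Campanato embedding -- are classical and routine.
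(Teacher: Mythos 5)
The paper does not actually prove this theorem: it is quoted as a combination of known results from the cited references, so your job was to reconstruct the classical arguments, and your second half (Tamanini-style excess decay plus Campanato) is indeed the standard route. The genuine gap is in the first half. You claim the two-sided volume density estimates follow from the relative isoperimetric inequality combined with the bound $P_{\rm Eucl}(E,B_r(x))\le Kr^{N-1}$, but the relative isoperimetric inequality points the wrong way for this: it gives $\min\{|E\cap B_r(x)|_{\rm Eucl},|B_r(x)\setminus E|_{\rm Eucl}\}^{\frac{N-1}{N}}\le C_N\,P_{\rm Eucl}(E,B_r(x))\le C_NKr^{N-1}$, i.e.\ only a (trivially true) upper bound, and no lower bound $|E\cap B_r(x)|_{\rm Eucl}\ge c_0r^N$ can come out of it. In fact an upper perimeter density bound alone is compatible with an exponentially thin spike attached to $E$: at the tip the lower volume density and porosity fail, so no argument using only $P_{\rm Eucl}(E,B_r(x))\le Kr^{N-1}$ can close. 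What the cited literature uses is quasiminimality as a \emph{comparison} property, $P_{\rm Eucl}(E,B_r(x))\le K\,P_{\rm Eucl}(F,B_r(x))$ for every $F$ with $F\Delta E\comp B_r(x)$: comparing $E$ with $E\setminus B_\rho(x)$ (resp.\ $E\cup B_\rho(x)$) bounds $P_{\rm Eucl}(E,B_\rho(x))$ by the spherical slice $\H^{N-1}(E^{(1)}\cap\partial B_\rho(x))$, and then the isoperimetric inequality applied to $E\cap B_\rho(x)$ yields the differential inequality $c_N\,m(\rho)^{\frac{N-1}{N}}\le (K+1)\,m'(\rho)$ for $m(\rho)=|E\cap B_\rho(x)|_{\rm Eucl}$, which integrates to the lower density bound (and symmetrically for the complement). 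The resulting lower \emph{perimeter} density bounds are also what controls the number of balls in your tubular-neighbourhood packing step for porosity, which is not justified by the perimeter upper bound alone. Note that this comparison property is exactly what the paper's proof of Theorem~C actually provides via the $\eps-\eps^{\frac{N-1}{N}}$ construction, even though Definition~\ref{defrpp} records only the resulting upper bound.

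Two smaller points on the second half. In the excess-decay lemma you should aim for a decay factor strictly stronger than the forcing, i.e.\ $e(x,\theta r)\le C\theta^2e(x,r)+C\omega(r)$; with your exactly matching exponent $\theta^{2\eta}$ the iteration only yields $e(x,r)\lesssim r^{2\eta}\log(1/r)$. Since $2\eta\le 1<2$, the standard lemma gives what you need. Finally, the conclusion is that $\partial^*E$ is ${\rm C}^{1,\eta}$ with $\partial E\setminus\partial^*E$ of codimension at least $8$ (hence $\H^{N-1}$-negligible), which is the precise form in which the quoted theorem is used in the paper.
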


We are now ready to prove Theorem~\ref{thm:regularity}.

\begin{proof}[Proof of Theorem~\ref{thm:regularity}]
Let $E\subseteq\R^N$ be an isoperimetric set. Thanks to Theorem~\ref{thm:stdreg}, we only need to check that $E$ is locally quasi-minimal whenever $f$ and $h$ are locally bounded, and that if in addition $h=h(x)$ is $\alpha$-H\"older for some $0<\alpha\leq 1$ then $E$ is also locally $\omega$-minimal with $\omega(r)=Cr^{2\eta}$, being $\eta$ as in~(\ref{defeta}).\par

We fix a generic ball $B\subseteq\R^N$, and we only need to consider balls $B_r(x)\comp B$. Since $f$ and $h$ are locally bounded and l.s.c., there exists some constant $M>0$ such that
\begin{align*}
\frac 1M \leq f(x) \leq M\,, && \frac 1M \leq h(x, \nu) \leq M\,,&& \forall\,x\in B,\,\nu\in\S^{N-1}\,.
\end{align*}
Let us start to consider the quasi-minimality. First of all, by Theorem~\ref{thm:e-ebeta} we know that the $\eps-\eps^{\frac{N-1}N}$ property holds for $E$. Then, take two disjoint balls $B_1$ and $B_2$, both intersecting $\partial^* E$ in a set of positive $\H^{N-1}$-measure, and let $C_1,\, C_2$ and $\bar\eps_1,\,\bar\eps_2$ be the corresponding constants according to~(\ref{epsepsbeta}). Let also $C=\max\{C_1,\,C_2\}$ and $\bar\eps=\min\{\bar\eps_1,\,\bar\eps_2\}$, and let
\[
\bar r = \min \bigg\{ \bigg(\frac{\bar\eps}{M\omega_N}\bigg)^{1/N},\, {\rm dist}(B_1,\,B_2)\bigg\}\,.
\]
Let now $B_r(x)\comp B$ be any ball. If $r<\bar r$, then by definition of $\bar r$ we have that
\begin{equation}\label{epqu}
\eps:=|B_r(x)\cap E|< M \omega_N r^N<\bar\eps\,,
\end{equation}
and that $B_r(x)$ cannot intersect both $B_1$ and $B_2$. Thus, without loss of generality we may assume that $B_r(x)\cap B_1=\emptyset$. As a consequence, by the $\eps-\eps^{\frac{N-1}N}$ property we can find a set $F\subseteq\R^N$ such that $F\Delta E\comp B_1$, $|F|=|E|+\eps$ and $P(F)\leq P(E) + C\eps^{\frac{N-1}N}$. Calling then $G=F\setminus B_r(x)$, we have that $|G|=|E|$, and by the minimality of $E$, (\ref{eq:bounds}) and~(\ref{epqu}) we have
\[\begin{split}
P(E)&\leq P(G) \leq P(F) - P(E,B_r(x))+ N\omega_NM r^{N-1}
\leq P(E)+ C \eps^{\frac{N-1}N} - P(E,B_r(x))+ N\omega_NM \,r^{N-1} \\
&\leq P(E) - P(E,B_r(x))+ N\omega_NM \,r^{N-1} + C M^{\frac{N-1}N}\omega_N^{\frac{N-1}N} r^{N-1}\,.
\end{split}\]
Hence,~(\ref{quasiminimality}) holds true for $B_r(x)$ with the choice
\[
K=N\omega_NM^2 + C M^{1+\frac{N-1}N}\omega_N^{\frac{N-1}N} \,.
\]
If otherwise $r\geq \bar r$, we clearly have
\[
P_{\rm Eucl}(E,B_r(x))\leq M P(E,B_r(x)) \leq MP(E) \leq \frac{MP(E)}{\bar r^{N-1}}\, r^{N-1}\,,
\]
so that~(\ref{quasiminimality}) again holds true for $B_r(x)$. The local quasi-minimality of $E$ is then proved.\par
Let us then assume that $h$ only depends on the spatial variable and that it is $\alpha$-H\"older, so that by Theorem~\ref{thm:e-ebeta} we have the validity of the $\eps-\eps^\beta$ property with $\beta$ given by~(\ref{defbeta}). To conclude the proof, we have to check the $\omega$-minimality of $E$ for balls $B_r(x)\comp B$ with $\omega(r)=\overline Cr^{2\eta}$, being $\eta$ as in~(\ref{defeta}) and being $\overline C$ a suitable constant. Up to increasing the constant $\overline C$, we can restrict ourselves to consider only radii $r<\bar r$, being $\bar r$ as before. Let then $B_r(x)\subseteq B$ be any ball, and $H\subseteq\R^N$ a set such that $H\Delta E\comp B_r(x)$. Moreover, let us call $\eps=|E|-|H|$, which satisfies $|\eps|< \bar\eps$ since $r<\bar r$. Arguing exactly as in the first part of the proof, we can find a set $F\subseteq\R^N$ such that $F\Delta E$ is a positive distance apart from $B_r(x)$, that $|F|=|E|+\eps$, and that $P(F)\leq P(E) + C |\eps|^\beta$. Calling $G=\big(F \setminus B_r(x)\big)\cup \big(H\cap B_r(x)\big)$, by construction we have that $|G|=|E|$, so by the isoperimetric property of $E$ we can evaluate
\[
P(E)\leq P(G)= P(F) +P(H,B_r(x))-P(E,B_r(x))\leq P(E) +C|\eps|^\beta +P(H,B_r(x))-P(E,B_r(x))\,.
\]
Calling then $m=\max \{ h(x),\, x\in B_r(x)\}\geq 1/M$, by the $\alpha$-H\"older property of $h$ and by~(\ref{epqu}) we have
\[\begin{split}
(m-C'r^\alpha) P_{\rm Eucl}(E,B_r(x))&\leq P(E,B_r(x)) \leq P(H,B_r(x)) +C|\eps|^\beta \leq m P_{\rm Eucl}(H,B_r(x)) + C|\eps|^\beta\\
&\leq mP_{\rm Eucl}(H,B_r(x)) + C (M\omega_N r^N)^\beta\,.
\end{split}\]
By the first part of the proof $E$ is quasi-minimal, thus
\[
P_{\rm Eucl}(E,B_r(x)) \leq P_{\rm Eucl}(H,B_r(x)) + CM^{1+\beta}\omega_N^\beta r^{N\beta} + C' M K r^{\alpha+N-1}\leq \overline C r^{N\beta}\,,
\]
where the last inequality comes from the fact that $N\beta \leq \alpha+N-1$, as one readily obtains from~(\ref{defbeta}). We established the $\omega$-minimality of $E$ with $\omega(r)=\overline C r^{N\beta-(N-1)}$, which completes the proof since we have $N\beta-(N-1)=2\eta$.
\end{proof}

\bibliographystyle{plainurl}

\bibliography{e-ebeta-doubled}

\end{document}